\let\ab\allowbreak
\newcommand{\eps}{\epsilon}
\numberwithin{equation}{section}
\newtheorem{theorem}{Theorem}
\newtheorem{lemma}[theorem]{Lemma}
\newtheorem{claim}[theorem]{Claim}
\theoremstyle{definition}
\newcommand{\set}[1]{\left\{#1\right\}}
\newcommand{\parens}[1]{\left( #1 \right)}
\newcommand{\floor}[1]{\left\lfloor #1 \right\rfloor}
\def\E{\mathbb{E}}
\newcommand{\Po}{\operatorname{Po}}
\newcommand{\Bin}{\operatorname{Bin}}
\newcommand{\bb}{\mathbf{b}}
\renewcommand{\aa}{\mathbf{a}}
\newcommand{\abs}[1]{\left|{#1}\right|}
\newcommand{\card}[1]{\left|{#1}\right|}
\newcommand{\tcard}[1]{|{#1}|}
\renewcommand{\flat}[1]{\dot{#1}}
\newcommand{\Xflat}{\flat{X}}
\newcommand{\flatVD}{\flat{V}(D)}
\newcommand{\vol}{\operatorname{vol}}
\newcommand{\Np}{N^+}
\newcommand{\Nm}{N^-}
\def\d{\delta}
\def\D{\Delta}
\def\la{\lambda}
\def\to{\rightarrow}
\def\calr\ensuremath
\def\calb{\ensuremath{\mathcal B}}
\def\bk{\setminus}
\newcommand{\ignore}[1]{\relax}
\DeclareMathSymbol{\Nu}{\mathalpha}{operators}{"4E}
\renewcommand{\Nu}{n_H}
\renewcommand{\nu}{\gapd}
\newcommand{\ax}{a_x}
\newcommand{\bx}{b_x}
\newcommand{\ttfrac}[2]{{#1}/{#2}}
\newcommand{\rx}{\rho}
\newcommand{\one}[1]{{\mathbf 1} \! \left[ {#1} \right] }
\newcommand{\onex}{\mathbf 1}
\newcommand{\calP}{\ensuremath{\mathcal{P}}\xspace}
\newcommand{\calC}{\ensuremath{\mathcal{C}}\xspace}
\def\calM{\ensuremath{\mathcal M}\xspace}
\def\calA{\ensuremath{\mathcal A}\xspace}
\def\calB{\ensuremath{\mathcal B}\xspace}
\def\calD{\ensuremath{\mathcal D}\xspace}
\def\calH{\ensuremath{\mathcal H}\xspace}
\newcommand{\gapd}{\eps_D}
\newcommand{\gapq}{\eps_Q}
\newcommand{\dd}{\delta_D}
\newcommand{\AX}{A_X}
\newcommand{\BX}{B_X}
\newcommand{\nn}{N}
\newcommand{\etad}{\eta_{c,k}(\d)}
\title[Matchings and loose cycles in the semirandom hypergraph model]{Perfect matchings and loose Hamilton cycles \\ in the semirandom hypergraph model}
\author{Michael Molloy}
\address{Department of Computer Science,
University of Toronto, Toronto, ON, Canada}
\email{molloy@cs.toronto.edu}
\author{Pawe\l{} Pra\l{}at}
\address{Department of Mathematics, Toronto Metropolitan University, Toronto, Canada}
\email{pralat@torontomu.ca}
\author{Gregory B.\ Sorkin}
\address{Department of Mathematics,
The London School of Economics and Political Science, London, England}
\email{g.b.sorkin@lse.ac.uk}
\begin{document}

\date{24 September 2024}

\begin{abstract}
We study the 2-offer semirandom 3-uniform hypergraph model on $n$ vertices.
At each step, we are presented with 2 uniformly random vertices.
We choose any other vertex, thus creating a hyperedge of size 3.
We show a strategy that constructs a perfect matching, and another that constructs a loose Hamilton cycle, both succeeding
asymptotically almost surely
within $\Theta(n)$ steps.
Both results extend to $s$-uniform hypergraphs.
The challenges with hypergraphs, and our methods, are qualitatively different from what has been seen for semirandom graphs. 
Much of our analysis is done on an auxiliary graph
that is a uniform $k$-out subgraph of a random bipartite graph,
and this tool may be useful in other contexts.
\end{abstract}

\maketitle

\section{Introduction and Main Results}

\subsection{The model} 
The canonical Erd\H{o}s-R\'enyi random graph model~\cite{ER1959} has been studied exhaustively since the 1960s.
It is deeply understood, 
and by now a great deal of attention has been devoted to variants and special cases, including hypergraphs, sparse graphs, planar graphs, and maker-breaker games. 
A version of the Erd\H{o}s-R\'enyi model is that, given $n$ vertices, edges arrive, one per round, uniformly at random.
One variant, suggested around 2000 by Achlioptas and first analyzed in~\cite{BohmanFrieze2001}, is that edges arrive two at a time, and just one of the two is added to the graph.

The \textbf{semirandom graph process} we consider here has a similar character. 
Instead of being offered a pair of edges, we are offered a vertex, and we may add any edge on that vertex; another way to view it is that we are offered a star, and choose one of its edges to add.
This process was suggested by Peleg Michaeli, introduced formally in~\cite{beneliezer2019semirandom} in 2020, and has already received a good deal of attention.
It can be viewed as a ``one player game'', and generalizes several well-known and important processes including the Erd\H{o}s-R\'enyi random graph process itself, and the  ``$k$-out process'' (about which we will say more).
We concentrate on its natural generalization to hypergraphs, as proposed in~\cite{behague2022}.

In the \textbf{$r$-offer $s$-uniform semirandom model}, the player constructs a sequence of $s$-uniform hypergraphs $H_t$ on vertex set $[n]:=\{1,\ldots, n\}$, with the goal that $H_t$ satisfies a given property $\mathcal{P}$ as quickly as possible.
In each \textbf{round} $t \in \mathbb N$, a uniformly random set of $r$ vertices is \textbf{offered}, and the player \textbf{chooses} $s-r$ additional vertices according to some \textbf{strategy}, to make a hyperedge that is added to $H_{t-1}$ to form $H_t$.
When choosing the $s-r$ vertices, the player has full knowledge of the $r$ vertices offered and of the past, including the hypergraph $H_{t-1}$ at the end of the previous round.

\subsection{Our contribution} \label{sec:contribution}
Our contribution is to resolve some natural first questions for the hypergraph semirandom process, notably in the case where each offer specifies two vertices rather than just one.
We believe that this is the first work on semirandom hypergraphs that is qualitatively different from anything that has been done for semirandom graphs.
In this work, the property \calP is that the semirandom hypergraph should contain as a subgraph a desired structure, namely a perfect matching or a loose Hamilton cycle.
Constructing such spanning structures has also been one focus of the study of semirandom graphs.

A main point of difference is that for graphs, it has always been possible to quickly and simply construct most of the structure desired, then cleverly fill in the final gaps.
That approach does not seem to work here.
Instead, we first generate a large absorbing structure of some sort,
build a small part of the desired structure using the few vertices absent from the absorber,
then use the absorber to complete the desired structure.
Also, while the semirandom work on graphs has been able to address the desired graph directly, we found it necessary to work in an auxiliary graph. 

To suggest the profoundly greater challenges of semirandom hypergraphs compared with graphs, let us mention an approach to matching that works for graphs but seemingly not for hypergraphs.
It is easy to semirandomly construct a random $k$-out graph or hypergraph; we discuss this further in \Cref{background}.
In the graph case, by~\cite{frieze1986maximum}, the $k$-out almost surely contains a perfect matching.
In the hypergraph case, by contrast,
Devlin and Kahn~\cite{devlin2017perfect}
observe that the corresponding statement is ``almost certainly correct [but] likely to be difficult [to show]'',
as it is stronger than the 2008 resolution of Shamir's Problem by~\cite{JKV2008}
and would imply
a (still open) ``natural guess regarding a beautiful problem of Frieze and Sorkin'' (from~\cite{frieze2015efficient}).
Devlin and Kahn make progress towards this by proving the presence of a perfect \emph{fractional} matching, but that is not sufficient for our purpose.
Even if this approach did work, it would likely be existential;
we prefer (and obtain) an efficient construction.

To avoid confusion, we remark that our successful approach to hypergraph matching has some superficial similarity to this. 
We are able to construct a $k$-out auxiliary graph (not hypergraph) on most (unfortunately not all) of the vertices to give an absorbing structure that guarantees a hypergraph matching after the exceptional vertices are dealt with.

\subsection{Our results}

Results presented in this paper are asymptotic by nature. We say that a property $\mathcal P$ holds \textbf{asymptotically almost surely} (\textbf{a.a.s.}) if the probability that
$H$ has property $\mathcal P$ tends to $1$ as $n$ goes to infinity.

\smallskip

In our first main result, we concentrate on the 2-offer semirandom construction of a 3-uniform hypergraph $H$ on $n$ vertices, $n$ divisible by 3, so that $H$ will contain a perfect matching (a partition of the set of vertices into $n/3$ hyperedges).

\begin{theorem} \label{mainthm}
For some constant $C$, the 2-offer 3-uniform semirandom model has a strategy that,
for $n$ divisible by $3$,
in time $t=Cn$, gives a hypergraph $H_t$ a.a.s.\ containing a perfect matching.
\end{theorem}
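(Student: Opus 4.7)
The plan is to aim for a perfect matching whose every hyperedge has the form $\{b,a_1,a_2\}$ for a fixed partition $V=A\sqcup B$ with $|A|=2n/3$ and $|B|=n/3$. The strategy divides into three phases. In \emph{Phase 1 (absorber build)}, spend $C_1 n$ rounds producing triples of the target form: whenever both offered vertices lie in $A$, choose a uniformly random $b\in B$; whenever exactly one offered vertex lies in $A$ and the other in $B$, choose another uniformly random $a\in A$; the remaining offers (both offered vertices in $B$) are discarded. From the generated multiset of hyperedges $\{b,a_1,a_2\}$, form an auxiliary bipartite graph $\Gamma$ on parts $B$ and $A$, where each hyperedge contributes the two edges $(b,a_1)$ and $(b,a_2)$. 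The first step is to show that, by choosing $C_1$ large enough, $\Gamma$ stochastically contains a uniform $k$-out subgraph of a random bipartite graph on $B\cup A$ for any prescribed constant $k$. A Poissonization of the offer stream, coupled to the internal random choices of the strategy, should make this coupling rigorous.

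In \emph{Phase 2 (auxiliary matching)}, I want a ``2-to-1'' matching $M$ in $\Gamma$: every $b\in B$ has degree $2$ in $M$ and every $a\in A$ has degree $1$, so $|M|=|A|=2|B|$. For $M$ to translate into a hypergraph matching, the two $A$-partners chosen for each $b$ must in fact coincide with the two $A$-vertices of a single hyperedge generated in Phase 1. This calls for strengthening Frieze's 1986 theorem on perfect matchings in $k$-out bipartite graphs to a labelled setting, where each bipartite edge carries the identity of its sibling edge from the same triple. Hall-type expansion estimates, combined with a contraction argument that treats each generated pair of $A$-vertices as a single unit, should establish the existence of such a consistent 2-to-1 matching a.a.s., possibly outside a sublinear ``exceptional'' set $X\subseteq B$ together with its would-be $A$-partners.

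\emph{Phase 3 (exceptional vertices)} disposes of the $o(n)$ vertices in $X$ in $O(n)$ additional rounds. For each $v\in X$, wait for an offer that pairs $v$ with a useful partner, then perform a swap: replace a carefully chosen hyperedge of the existing partial matching by a new hyperedge containing $v$, and reroute the displaced vertex into the linear-sized reservoir of unmatched $A$-vertices left over from Phase 2. Standard concentration bounds on the Phase 1 offer sequence show the reservoir is large enough, and that $O(n)$ rounds suffice with high probability.

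The central obstacle is Phase 2. Producing a 2-to-1 matching in a $k$-out bipartite graph is a modest extension of known $k$-out results, but forcing this matching to be consistent with the triples of Phase 1 is genuinely new: one must control the joint distribution of pairs of bipartite edges that originate from a common hyperedge, and arrange the matching analysis to respect that joint structure. I expect this to consume most of the technical machinery; by contrast, Phase 1 is a coupling and concentration exercise, and Phase 3 is a routine absorber step once the reservoir and swap structure of Phase 2 are in place.
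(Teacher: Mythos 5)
Your three-phase architecture (absorber, auxiliary matching, exceptional fix-up) superficially parallels the paper's, but the step you yourself identify as the heart of the argument --- Phase~2 --- is left unproved, and it is precisely the difficulty the paper's construction is designed to avoid. Because you do not fix the pairs of $A$-vertices in advance, each $b\in B$ finishes Phase~1 holding a collection of uniformly random \emph{pairs} from $A$, and your ``consistent 2-to-1 matching'' asks to select one pair per $b$ so that the selected pairs are pairwise disjoint and cover (almost all of) $A$. That is a perfect-matching statement for a random $k$-out-type 3-uniform structure, and the paper explicitly explains (via the Devlin--Kahn discussion) that the analogous statement for random $k$-out hypergraphs is open and believed difficult --- stronger than the resolution of Shamir's problem. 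Your proposed route (``strengthening Frieze's 1986 theorem to a labelled setting'', ``Hall-type expansion plus a contraction argument'') is exactly the missing ingredient, and you concede it is ``genuinely new''; nothing in the proposal supplies it. The paper sidesteps this entirely by pre-partitioning the vertex set into apexes and \emph{fixed} base pairs, so that each Phase~1 hyperedge corresponds to a single edge of an honest bipartite auxiliary graph between apex nodes and base-pair nodes; then a lemma extracting a uniformly random $k$-out subgraph on most nodes, plus a robust Hall's-condition lemma (a perfect matching survives deleting a small fraction of nodes while each remaining out-degree drops by at most one), yields an ordinary bipartite perfect matching, with the exceptional vertices absorbed \emph{before} the matching is taken, using offers consisting of two apexes.

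There is also an internal inconsistency in your Phase~3: with $|A|=2n/3$ and $|B|=n/3$, a 2-to-1 matching with every $a\in A$ of degree 1 uses \emph{all} of $A$, so there is no ``linear-sized reservoir of unmatched $A$-vertices'' into which displaced vertices can be rerouted; conversely, if you deliberately leave such a reservoir, then the unmatched set is linear in $n$, not the $o(n)$ exceptional set you claim to repair in $O(n)$ rounds. The paper resolves this counting issue by building in slack (slightly more than $n/3$ apexes and correspondingly fewer base pairs) and adding a sub-phase that disposes of surplus apexes three at a time, restoring exact equality of the two sides before Hall's condition is invoked; your exact sizes leave no room for that manoeuvre, so even granting Phase~2 the fix-up phase as described does not go through.
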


We present the strategy explicitly, and show how to construct a matching in $H$ (not merely proving its existence).
This result is clearly best possible up to the constant $C$, which we have made no effort to optimise.

Our linear-time construction extends to perfect matchings in an $s$-uniform hypergraph when the player is offered $r=2$ vertices and is able to select $s-2$ vertices. Moreover, it immediately implies the same result for a ``1-offer'' semirandom model, as the player can simply simulate a 2nd random offer vertex with the first of the $s-1$ chosen vertices.

\begin{theorem} \label{s-uniform}
For any $s \ge 3$ and $r \in \set{1,2}$,
for some constant $C$, the $r$-offer $s$-uniform semirandom model has a strategy that,
for $n$ divisible by $s$,
in time $t=Cn$, gives a hypergraph $H_t$  a.a.s.\ containing a perfect matching.
\end{theorem}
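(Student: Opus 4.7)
The plan is to reduce both parameters to the case of Theorem~\ref{mainthm}. First, the case $r=1$ reduces to $r=2$ as already sketched: given a single offered vertex $u$, the player draws one of the $s-1\ge 2$ chosen vertices, $v$, uniformly from $[n]\setminus\{u\}$ and treats $\{u,v\}$ as a $2$-offer pair, invoking the $r=2$ strategy to pick the remaining $s-2$ vertices. Since $v$ is essentially uniform on $[n]$ and independent of $u$, the simulated $2$-offer process is within $o(1)$ total variation of a true one over $O(n)$ rounds, so the $r=2$ guarantee transfers.

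For $r=2$ and $s\ge 4$ (the case $s=3$ being exactly Theorem~\ref{mainthm}), I would use a kernel--filler reduction. Fix a ``kernel'' set $K\subseteq[n]$ of size $|K|=3n/s$, and partition $[n]\setminus K$ arbitrarily into $n/s$ ``filler groups'' $G_1,\ldots,G_{n/s}$ of size $s-3$ each. Call a round a \emph{kernel round} if both offered vertices lie in $K$; conditional on the event (which has probability $p=\binom{|K|}{2}/\binom{n}{2}\sim 9/s^2$), the pair is uniform in $\binom{K}{2}$, independently across rounds. Thus the subsequence of kernel rounds is a genuine $2$-offer $3$-uniform semirandom process on $K$, and Theorem~\ref{mainthm} produces a perfect $3$-matching $M=\{T_1,\ldots,T_{n/s}\}$ of $K$ within $C'|K|=O(n)$ kernel rounds; a Chernoff bound on the number of kernel rounds among the first $t$ offers turns this into $t=O(n)$ total rounds, with constant depending on $s$.

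Within each kernel round the $3$-uniform strategy selects $w\in K$ to form the triple $T=\{u,v,w\}$; the player then inserts into $H_t$ the $s$-edge $T\cup G_j$ for some filler index $j$. Indices are to be chosen so that each matching triple $T_i$ eventually placed into $M$ is paired with filler $G_i$; non-matching triples may be padded with any (possibly reused) filler, and non-kernel rounds may form any $s$-edge containing the offered pair. The final $s$-matching $\{T_i\cup G_i:i\in[n/s]\}$ partitions $[n]$ and, by this construction, lies in $H_t$.

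The main technical obstacle is the filler--matching alignment: the filler $G_j$ placed in $H_t$ when the triple $T$ is formed must agree with the filler $G_i$ assigned to $T$ in the final matching. The cleanest resolution is to verify that the $3$-uniform strategy of Theorem~\ref{mainthm} can be taken to commit matching triples \emph{online}, so that as $T_i$ is formed it is already known to be the $i$-th matching edge and can be paired with $G_i$. Given the absorber-based architecture described in Section~\ref{sec:contribution}, online commitment for the bulk of the matching is natural, with a brief auxiliary phase handling any residual edges. Failing that, one may just adapt the proof of Theorem~\ref{mainthm} directly to the $s$-uniform setting, tracking $s$-edges (kernel triple plus filler group) throughout the construction rather than invoking the $3$-uniform theorem as a black box.
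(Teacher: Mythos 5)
Your reduction of $r=1$ to $r=2$ is fine and is exactly what the paper does. The problem is the main case $r=2$, $s\ge 4$: the kernel--filler reduction has a genuine gap, and it is precisely the gap you name but do not close. In the strategy proving \Cref{mainthm}, the bulk of the matching is \emph{not} committed online: Phase~1 creates $\Theta(kn)$ candidate hyperedges (one per edge of the auxiliary $k$-out graph), and which of them end up in $\calM$ is decided only at the very end of Phase~3, by exhibiting a perfect matching in the surviving auxiliary graph via Hall's condition (\Cref{Hall}). The absorber architecture you appeal to exists exactly \emph{because} the player cannot know, at creation time, which Phase-1 hyperedges will be used; so your ``cleanest resolution'' (online commitment of the bulk matching triples) rests on a false premise. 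Concretely, when a kernel triple $T$ is formed you must choose its $s-3$ filler vertices in that same round, but there are only $n/s$ filler groups and $\Theta(n)$ candidate triples, so fillers must be reused, and nothing in your argument guarantees that the triples selected later by Hall's theorem carry distinct filler groups covering all of them. Your fallback (``adapt the proof of Theorem~\ref{mainthm} directly'') is indeed what is needed --- the paper itself states that \Cref{s-uniform} is not a black-box corollary of \Cref{mainthm} --- but you do not carry it out, so as written the proof is incomplete.

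For comparison, the paper resolves this by building the $s$-uniformity into the construction from the start (\Cref{sec:sunif}): the bases are groups of $s-1$ vertices, so every auxiliary-graph edge (apex, base) already corresponds to a full $s$-edge, Phase~2a/2b hyperedges consist of the two offered apex points plus $s-2$ chosen points, and a counting argument modulo $s$ (equation \eqref{match-s}) guarantees that $\card A=\card B$ can be achieved exactly before Phase~3. Your kernel--filler idea could in fact be repaired along these lines --- pre-assign one filler group to each base pair so that every Phase-1 hyperedge through base $\bb$ carries the same filler, and hand the leftover fillers to the (genuinely online) Phase-2 triples, checking that the counts match --- but doing so amounts to re-running the paper's $s$-uniform construction with bases of effective size $s-1$, not to invoking \Cref{mainthm} as a black box.
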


Our second main result is a somewhat similar strategy to construct a 3-uniform hypergraph $H$ on $n$ vertices, $n$ divisible by 2, so that $H$ will contain a loose Hamilton cycle.
\begin{theorem} \label{HcThm}
For some constant $C$, the 2-offer 3-uniform semirandom model has a strategy that,
for $n$ divisible by $2$,
in time $t=Cn$, gives a hypergraph $H_t$ a.a.s.\ containing a loose Hamilton cycle.
\end{theorem}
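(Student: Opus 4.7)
The strategy is to follow the template of Theorem~\ref{mainthm}: build an auxiliary graph through the semirandom process in which the desired structure is easier to find and verify, then read it off. Fix at the outset a bipartition $[n] = L \dotcup M$ with $|L| = |M| = n/2$; the aim is a loose Hamilton cycle in which $L$ are the \emph{link} vertices (each appearing in two hyperedges) and $M$ are the \emph{middle} vertices (each in one hyperedge). Every valid hyperedge is then of the form $\{l, m, l'\}$ with $l, l' \in L$ and $m \in M$, and contributes the edge $\{l, l'\}$ \emph{labelled} by $m$ to an auxiliary graph $H$ on vertex set $L$. Under this correspondence, a loose Hamilton cycle of the hypergraph is exactly a \emph{rainbow} Hamilton cycle of $H$: a Hamilton cycle using each $M$-label at most (and hence, by counting, exactly) once.

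When the offered pair $\{u,v\}$ lies in $L$, pick a fresh $m \in M$ and form $\{u, m, v\}$, contributing a labelled edge to $H$. When exactly one of $u, v$ lies in $L$, say $u \in L$ and $v \in M$, pick some $l' \in L$ and form $\{u, v, l'\}$, contributing to $H$ the edge $\{u, l'\}$ with label $v$. When both are in $M$ (probability $\approx 1/4$), use a ``dummy'' hyperedge; these rounds are wasted but cost only a constant factor in the running time. Mirroring the analysis in the proof of Theorem~\ref{mainthm}, after $Cn$ rounds $H$ is distributed close to a random $k$-out graph on $L$ in which the labels on each vertex's incident edges form a near-uniform random subset of $M$, for any fixed $k$ (with $C = C(k)$).

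The new ingredient is extracting a rainbow Hamilton cycle from $H$, which is where the proof must diverge from the matching argument: Hamiltonicity is a global constraint that cannot be achieved by purely local completions. I would handle this in the standard absorbing style. First, reserve a small random set $R \subset L$ of size $n^{1-\delta}$. During the main phase, use the auxiliary graph to build a long rainbow loose path $P$ on $L \setminus R$ that serves as an absorber---so that any sublinear set of unused vertices can later be inserted into $P$ by local switches using spare labelled edges. Then use the remaining $k$-out structure to extend $P$ through the vertices of $R$ by P\'osa rotation-extension, closing up a rainbow Hamilton cycle of $H$.

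The main obstacle is verifying that the auxiliary random $k$-out graph---together with its edge labels---is rich enough to simultaneously support (i) the construction of the absorber, (ii) P\'osa rotation-extension through $R$, and (iii) the rainbow (distinct-label) constraint throughout. This will require combining the $k$-out tools developed for Theorem~\ref{mainthm} with rainbow-Hamiltonicity techniques of Cooper--Frieze type; the fact that labels are not independent of edges (both arise from the same hyperedge) is a secondary subtlety that needs to be tracked carefully but should not change the qualitative outcome.
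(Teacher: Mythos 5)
Your dictionary between the hypergraph and the labelled auxiliary graph on $L$ is correct (a loose Hamilton cycle with links $L$ and middles $M$ is exactly a Hamilton cycle of the link graph using each $M$-label exactly once), and your Phase-1-style construction of a roughly $k$-out labelled graph is achievable. But the step that carries all the difficulty --- actually extracting a \emph{perfectly rainbow} Hamilton cycle from a graph with only $\Theta(n)$ labelled edges in which each label occurs $O(1)$ times --- is missing, and the toolkit you point to does not supply it. Cooper--Frieze-type rainbow Hamiltonicity results live at density $\Theta(n\log n)$ and only require each colour \emph{at most} once; P\'osa rotation--extension does not preserve the distinct-label constraint; and an absorber would now have to absorb unused middle labels as well as unused link vertices, which your sketch does not address. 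Note also a structural obstruction in the semirandom model itself: in the rounds where both offered points lie in $L$ you may choose the label but the edge is a uniformly random pair, while in the mixed rounds you may steer one endpoint but the label is random --- you never control both. So each label class you can guarantee to be nonempty consists of uniformly random edges, and demanding a Hamilton cycle through exactly one edge of each class is just the original problem restated: it is equivalent to finding a loose Hamilton cycle in a linear-size random-like $3$-uniform hypergraph, the same flavour of question that \Cref{sec:contribution} notes is open (and at least Shamir-hard) even for $k$-out hypergraph matchings, and which the paper deliberately avoids. As written, the proposal reformulates the theorem rather than proving it.

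The paper sidesteps the global constraint before any randomness is spent, by fixing the cyclic order of the link (odd) vertices in advance: the auxiliary graph is bipartite between the apexes $\set{2,4,\ldots,n}$ and the \emph{overlapping} base pairs $\set{1,3},\set{3,5},\ldots,\set{n-1,1}$, so that a perfect matching of that bipartite graph is already a loose Hamilton cycle of the form \eqref{idealHc}. Hamiltonicity thus reduces to Hall's condition in a uniform $k$-out bipartite graph (\Cref{k-out}, \Cref{Hall}, \Cref{expansion}), i.e.\ to essentially the same machinery as Theorem~\ref{mainthm}; the only genuinely global work is re-stitching the $O(\gapd n)$ paths created by the failed nodes, which Phase~2b does by a rounds-based, birthday-paradox-style merging argument that keeps the auxiliary graph's degrees intact. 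To make your free-order rainbow formulation work you would instead need a new rainbow-Hamiltonicity theorem at linear density with every colour used exactly once, which is not currently available and would itself be a substantial result.
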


This result also extends.

\begin{theorem} \label{s-uniformHc}
For any $s \ge 3$ and $r \in \set{1,2}$,
for some constant $C$, the $r$-offer $s$-uniform semirandom model has a strategy that,
for $n$ divisible by $s-1$,
in time $t=Cn$, gives a hypergraph $H_t$  a.a.s.\ containing a loose Hamilton cycle.
\end{theorem}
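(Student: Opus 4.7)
The plan is to reduce Theorem~\ref{s-uniformHc} to Theorem~\ref{HcThm} through two independent reductions.

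\emph{Reducing $r = 1$ to $r = 2$.} When only a single vertex is offered, the player internally generates an additional uniformly random vertex of $[n]$ and treats it as a simulated second offer; the remaining $s-2$ choices then follow the $r=2$ strategy. The joint distribution of effective offers matches that of the $r=2$ model exactly, so all guarantees transfer. This reduces $r=1$ to $r=2$ and lets me focus on $r=2$ throughout.

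\emph{Reducing $s > 3$ to $s = 3$.} Set $m := n/(s-1)$, the number of hyperedges in any loose $s$-uniform Hamilton cycle on $n$ vertices. I would partition $V = [n]$ into a \emph{skeleton} $V'$ with $|V'| = 2m$ and a \emph{filler set} $F$ with $|F| = (s-3)m$, and further partition $F = F_1 \sqcup \cdots \sqcup F_m$ with $|F_i| = s-3$. The target is that for some 3-uniform loose Hamilton cycle $T_1, \ldots, T_m$ on $V'$, each $s$-uniform hyperedge $T_i \cup F_i$ lies in $H_t$. Since $V = V' \sqcup F$ and the $F_i$ are pairwise disjoint, these augmented hyperedges automatically form a loose $s$-uniform Hamilton cycle on $V$: consecutive ones share the same articulation vertex as the $T_i$'s, and nonconsecutive ones remain disjoint.

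Call a round \emph{active} if both offered vertices land in $V'$; this occurs with probability $(2/(s-1))^2 = \Omega(1)$ independently per round, so a Chernoff bound yields $\Theta(n)$ active rounds within $\Theta(n)$ total rounds. On active rounds, I would apply the 3-uniform strategy of Theorem~\ref{HcThm} on $V'$ to choose a third vertex in $V'$, then use the remaining $s-3$ slots to pad with a carefully chosen subset of $F$; on inactive rounds construct an arbitrary (irrelevant) hyperedge. The crux is to synchronize filler choices with the cycle structure: when the construction commits a triple $T_j$ to the cycle, the padding for that round is set to be exactly $F_j$, producing the hyperedge $T_j \cup F_j$.

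\emph{The main obstacle} is coupling the filler assignment with the cycle identification, since the cycle may only be pinned down after many triples have been formed. The cleanest route is to verify that the construction of Theorem~\ref{HcThm} can be made online-constructive in the sense that each cycle triple is committed at its creation time, so that the associated round's padding can be set to the matching $F_j$ on the fly. If the absorber-and-complete framework of Theorem~\ref{HcThm} does not immediately provide this, a fallback is to assign fillers randomly and then argue, via a bipartite matching / Hall's-theorem argument on the auxiliary ``triple versus filler group'' graph (echoing the $k$-out bipartite subgraph tool that the abstract highlights), that an injective assignment of filler groups to the $m$ cycle triples exists a.a.s. Either route preserves the linear $\Theta(n)$ runtime claimed by the theorem.
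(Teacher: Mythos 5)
Your $r=1$ to $r=2$ reduction is fine (modulo the harmless with/without-replacement detail), and restricting attention to rounds whose two offered vertices both land in the skeleton $V'$ does simulate a 2-offer process on $V'$. But the reduction of $s>3$ to $s=3$ has a genuine gap, and it is exactly the obstacle you flag: neither of your two routes around it works. Route (a) fails because the construction behind Theorem~\ref{HcThm} is \emph{not} online in the sense you need. The bulk of the cycle's triples are only determined at the very end, in Phase~3, as the edges of a perfect matching in the auxiliary $k$-out graph; the hyperedges realizing those triples were created back in Phase~1, when each base pair acquires roughly $2k$ candidate hyperedges of which only one is eventually used. In the $s$-uniform model all $s-2$ chosen vertices of a hyperedge must be fixed at its creation round, so the $s-3$ filler vertices must be committed long before anyone knows which candidate will be selected -- the ``commit each cycle triple at creation time'' property simply does not hold for the absorber-based strategy (the paper itself stresses that the $s$-uniform extension is not a black-box corollary). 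Route (b) fails for a counting reason: each created hyperedge carries exactly one filler group, so in your ``triple versus filler group'' bipartite graph almost every triple has degree one. A perfect matching then requires the $m$ filler labels attached to the $m$ selected cycle triples to form a permutation of the $m$ groups, an event of probability about $m!/m^m = e^{-\Theta(n)}$; no Hall-type argument can create flexibility that was never built into $H_t$, and building it in (several padded copies of the same skeleton triple) would require being re-offered specific skeleton pairs, which costs far more than linear time.

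The paper resolves this synchronization problem not by reducing to $s=3$ but by re-running the same three-phase strategy with the padding hard-wired into the combinatorial setup: the odd-indexed ``skeleton'' points (every $(s-1)$-st point) form overlapping base pairs $\set{b,b+s-1}$, and the $s-2$ points strictly between them form an \emph{apex block}, which plays the role of the single apex point of the $3$-uniform case. Whenever a base point and a point of an apex block are offered, the player chooses the base partner and the rest of that block, so every candidate hyperedge's ``filler'' is canonically the remainder of the offered block, and each block is a single node of the auxiliary graph that the final matching uses exactly once. That is precisely the mechanism your skeleton-plus-filler reduction lacks; to repair your argument you would have to tie each filler group to an apex of the skeleton (i.e., essentially adopt the paper's block construction) rather than assign fillers independently of the auxiliary-graph structure.
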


For both perfect matchings and Hamilton cycles, the 1-offer results follow as corollaries of the 2-offer ones but, unsurprisingly, can be obtained much more easily. 
One way to do so is touched upon in a footnote near the start of \Cref{lemmas}: 
where for both matchings and cycles our 2-offer constructions use a three-phase strategy, for the 1-offer model, a simple version of the first phase suffices.
These 1-offer constructions are presented as warm-ups near the start of \Cref{SRHmatching} (addressing matchings) and \Cref{HcSec} (Hamilton cycles).

\subsection{Outline}
\Cref{background} gives some background on the semirandom process.
\Cref{lemmas} sketches the approach we will take.
The approach relies greatly on a certain auxiliary graph, and
the section motivates, states, and proves the lemmas governing the auxiliary graph.
(Roughly: a random bipartite graph contains a large uniformly random $k$-out subgraph.
A.a.s., every large subgraph of such a graph, if it has minimum degree at least $k-1$
and parts of equal size, contains a perfect matching.)
Section~\ref{SRHmatching} proves Theorem~\ref{mainthm} by presenting a 2-offer strategy for constructing a 3-uniform hypergraph matching, and analysing it.
Subsection~\ref{sec:sunif} proves the extension to $s$-uniform hypergraphs. This is a simple modification of the argument for the 3-uniform case but requires reference to the strategy: Theorem~\ref{s-uniform} is not a black-box corollary of Theorem~\ref{mainthm}.
Section~\ref{HcSec} proves Theorems~\ref{HcThm} and~\ref{s-uniformHc}.
The strategy for constructing a loose Hamilton cycle is different in its details from that for matching,
with some new complications,
but it has the same general structure and shares many key elements.

\section{Background} \label{background}

We begin with a clarification.
Recall that our $r$-offer, $s$-uniform semirandom hypergraph is formed as follows: At each round $t$, the player is offered $r$ uniform vertices and then chooses $s-r$ additional vertices to form a hyperedge. In our proofs and previous works, there are situations where, upon seeing the offered vertices, the player decides not to build any hyperedge on them and so does not bother choosing $s-r$ vertices.  We say that the player \emph{ignores} the offer.  Formally, we can think of the player adding an arbitrary set of $s-r$ vertices and then never using the resulting hyperedge. This counts as a round and so, e.g., is one of the $Cn$ steps in Theorem~\ref{mainthm}.

\medskip

We now describe some of the main results in semirandom graphs, especially in how they relate to the present work. 

A prominent result on semirandom graphs concerns constructing a graph $G$ to contain a given spanning structure $H$, such as a perfect matching or Hamilton cycle~\cite{beneliezer2020fast}. 
It shows that for a spanner $H$ with maximum degree $\D$, this can be done in time $(3\D/2+o(\D))n$. 
Roughly speaking, the construction works by fixing an arbitrary correspondence between the vertices of $G$ and $H$, 
trying to build in $G$ the edges between each vertex $v$ and its desired neighbours $N_H(v)$, and (cleverly) patching up the vertices for which this fails. 
For the 2-offer 3-uniform semirandom model, this approach fails as soon as it fixes a correspondence and thus dictates the edges desired:
to construct a given edge $(u,v,w)$ requires an offer of some two of these vertices, and in time $O(n)$ the chance of this occurring is only $O(1/n)$.

Even for graphs, a limitation of~\cite{beneliezer2020fast} is that the $o(\D)$ term means that the coefficient of the linear running time is unknown for any fixed $\D$.
It therefore does not resolve efficient constructions for low-degree graphs of common interest, prominently including perfect matchings and Hamilton cycles.

\medskip

In the paper introducing the semirandom model, \cite{beneliezer2019semirandom}, it was shown that the semirandom process on graphs is general enough to simulate several well-studied random graph models (using suitable strategies), including the extensively studied \emph{$k$-out process}. 
Since the $k$-out model is essential to the work here, let us elaborate.

In the $k$-out process on $n$ vertices, each vertex independently connects to $k$ randomly selected vertices, resulting in a random graph with $k n$ edges (see, for example, 
\cite[Chapter~18]{Karonski_Frieze}). 
To capture the association between vertices and edges, it is often convenient to think of the process's result as a \emph{$k$-out random digraph} with $k$ random \emph{directed} edges out of each vertex.
This is easily simulated with the semirandom process by ``choosing'' vertex 1 to accompany each of the first $k$ offered vertices (constructing a directed edge from vertex 1 to each of them), choosing vertex 2 for the next $k$, etc.
(Details, such as the possibility of being offered vertex~1 when trying to construct edges out of vertex~1, are easily resolved and uninteresting.)

A linear-time semirandom construction of a \emph{perfect matching} follows from this, since a.a.s.\ the $2$-out process has a perfect matching~\cite{frieze1986maximum}.
Using the semirandom process to construct a different sort of random graph known to have a perfect matching a.a.s.\ (see \cite{pittel}), the upper bound can be improved to $(1+2/e+o(1))n < 1.73576n$~\cite{beneliezer2019semirandom}. 

In the 2-offer 3-uniform semirandom model it is equally easy to generate a \emph{random $k$-out hypergraph}, i.e., a hypergraph with $k$ random edges on (and associated with) each vertex.
Simply, to each of the first $k$ offered pairs add vertex~1,
and so on.
However, as discussed in the introduction (\Cref{sec:contribution}), it is not known whether a random $k$-out hypergraph a.a.s.\ contains a perfect matching,
and our approach to constructing a matching (and loose Hamilton cycle) is entirely different.

\medskip

For both perfect matchings and Hamilton cycles, researchers have sought the most efficient semirandom constructions possible.
For matchings, currently the most efficient construction uses time $1.20524 n$~\cite{gao2022perfect}.
On the other hand, \cite{beneliezer2019semirandom} observed that $(\ln(2)+o(1))n > 0.69314n$ is a lower bound on the number of rounds needed to create a perfect matching (indeed, to give each vertex nonzero degree), and this has since been improved to $0.93261 n$~\cite{gao2022perfect}.

Hamilton cycles in semirandom graphs have been studied in
\cite{beneliezer2019semirandom,gao2020hamilton,gao2022fully,frieze2022hamilton,ham_cycles_preprint}, and currently the best upper and lower bounds are $1.81701 n$ and $1.26575 n$~\cite{ham_cycles_preprint}.

\medskip

Another line of research on the semirandom model is in constructing a graph containing a given, fixed graph~$H$. Reminiscent of the derivation of thresholds for the presence of a fixed subgraph in a (usual) random graph,
the task is quite different from the one studied here.
For one thing, it can be accomplished in sublinear time:
where $H$ has degeneracy $d$, a copy 
can be constructed in time just larger than $n^{(d-1)/d}$ \cite{beneliezer2019semirandom},
but cannot be constructed in time just smaller than that~\cite{behague2022}.
The construction method is different from what has been done for spanning subgraphs, and what is done here:
a copy of $H$ is constructed vertex by vertex, in an order in which each vertex $v$ needs to connect to at most $d$ previous ones, using all offered vertices to try to fulfill the role of $v$.
The lower bound in \cite{behague2022} is more complicated but uses the same general ideas.
We mention this work primarily because \cite{behague2022} extends these graph results to hypergraphs, providing one of the few results for semirandom hypergraphs.
However, the extension to the 1-offer model is straightforward, while for 2 or more offers the understanding is incomplete, with the upper and lower bounds matching only for some special cases~\cite{Behague_preprint}. 

\medskip

We briefly mention a few other results.
Semirandom constructions to achieve the largest possible clique, largest possible chromatic number, and smallest possible independent set are studied in~\cite{gamarnik2023cliques}.
Sharp thresholds for the semirandom process and a more general class of processes were studied in~\cite{macrury2022sharp}.
Variants of the semirandom process have also been explored.
In~\cite{gilboa2021semi}, the first $n$ vertices offered are all distinct, as given by a random permutation; the next $n$ vertices are given by another permutation; and so on.
In~\cite{burova2022semi}, a random spanning tree of $K_n$ is presented, and the player keeps one of the edges.
In~\cite{Harjas}, $k$ random vertices are offered, and the player selects one of them, and freely chooses a second vertex.

\section{Outline, and lemmas on an auxiliary bipartite graph} \label{lemmas}

A \emph{uniformly random $k$-out bipartite multigraph}
(or just ``uniform $k$-out'' graph)
on disjoint node sets $\calA, \calB$
is a directed graph \calD in which every node in \calA has outedges to $k$ nodes in \calB
chosen uniformly at random with replacement,
and symmetrically for \calB.
The edge directions are generally ignored, here in particular, except as a convenient way to associate a vertex with ``its'' $k$ edges (namely, its $k$ out-edges). 

A uniform $k$-out auxiliary graph plays a central role in our semirandom construction.
To motivate the lemmas in this section,
we briefly explain the role each will play.
Proper explanations will come in later sections.

Throughout, we will refer to vertices of the auxiliary graph as ``nodes'',
calling vertices of the semirandom process itself ``points'', ``elementary vertices'', or occasionally ``vertices''.
We will use calligraphic letters in this section,
for general node sets \calA and \calB and a corresponding $k$-out graph \calD.
In context, later, we will define particular node sets $A$ and $B$
and a corresponding $k$-out graph~$D$.

The number of nodes in the auxiliary graph is different from the number of points in the hypergraph.
We will reserve $n$ for the latter, writing $\nn$ in the lemmas below to make it easy later to define $\nn$ in terms of $n$.
The $k$ in the lemmas is the same as that in the main argument;
indeed, we will take $k=10$ throughout.

\bigskip

Our semirandom constructions of both hypergraph matchings and loose Hamilton cycles
will begin by defining node sets \calA and \calB
(differently for the two cases).

\smallskip

\textbf{Phase~1} will
form a uniform $k$-out auxiliary graph from a subset of a set of uniformly random edges on $\calA \times \calB$.
Because the edges supplied are uniformly random,
a few nodes are bound to have too few edges,
but \Cref{k-out} shows that we can get a uniform $k$-out graph $\calD$ on \emph{most} of the nodes in \calA and \calB.
The few nodes missing from $\calD$ will correspond to points of the hypergraph we will have to deal with specially.%
\footnote{In the 1-offer model we could easily build a $k$-out graph on \emph{all} the nodes, whereupon for $k \geq 2$ the auxiliary graph a.a.s.\ has a perfect matching which, in turn, corresponds --- according to the case --- to a hypergraph perfect matching, or a hypergraph loose Hamilton cycle. 
We will note these cases as we come to them in early in \Cref{SRHmatching} and~\ref{HcSec}.
}

\smallskip

\textbf{Phase~2} will deal with these points.
The ``actions'' addressing them will have the side effect of deleting more nodes from $\calD$,
correspondingly decreasing the degrees of the remaining nodes.
We will arrange that the outdegree of a node in $\calD$
is always either $k$ or $k-1$,
by ensuring that each outdegree is reduced at most once (i.e., by one action), and by at most~1.
The ``by at most~1'' comes from
ensuring that we never delete a node with parallel inedges
(\Cref{1vx} shows that such nodes are rare),
and that we never simultaneously delete a set of nodes sharing an inneighbour
(\Cref{2vxs} shows that such sets are rare).
(Alternatively, ``by at most constant'' is immediate from the nature of the actions used,
and we could have used a larger $k$ to end with the same outdegree bound.)
The ``at most once'' is achieved by ``blocking'' a deleted node's inneighbour's outneighbours from future deletion, adding these nodes to a set $Q$ of nodes not to be deleted;
\Cref{expansion} is the basis for showing that the set $Q$ remains small.
After all the actions, \calD still contains most of \calA and \calB.

\smallskip

\textbf{Phase~3} will complete the hypergraph construction using $\calD$ as an absorber.
Walkup~\cite{walkup1980matchings} showed that
a random 2-out bipartite graph has a perfect matching a.a.s.,
and Frieze~\cite{frieze1986maximum} showed the same without the bipartiteness condition,
for 2-out graphs and multigraphs.
Here, despite the fact that \calD is neither exactly $k$-out nor uniformly random,
\Cref{Hall} shows that almost certainly Hall's condition is satisfied,
so it has a perfect matching.
The hypergraph construction will be completed by hyperedges corresponding to the matching edges.

\medskip

Throughout, we rely on McDiarmid's form of the Azuma-Hoeffding inequality~\cite{McDiarmid}.
\begin{theorem}[Azuma] \label{Azuma}
Let $X_1,\ldots X_n$ be independent random variables with $X_k$ taking values in a set $A_k$ for each $k$.
Suppose that the (measurable) function $f \colon \prod A_k \to \mathbb R$
satisfies $\abs{f(x)-f(x')} \leq c_k$ whenever the vectors $x$ and $x'$ differ only in the $k$th coordinate.
Let $Y$ be the random variable $f(X_1,\ldots,X_n)$.
Then for any $t>0$, $\Pr(\abs{Y-\E Y} \geq t) \leq 2 \exp(-2t^2/ \sum c_k^2)$.
\end{theorem}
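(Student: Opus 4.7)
The plan is to follow the textbook proof of McDiarmid's bounded-differences inequality, which combines the Doob martingale construction with the Azuma--Hoeffding inequality for martingales with bounded increments. First I would introduce the filtration $\mathcal{F}_k := \sigma(X_1, \ldots, X_k)$ and the Doob martingale $Y_k := \E[Y \mid \mathcal{F}_k]$, noting that $Y_0 = \E Y$ and $Y_n = Y$ almost surely, so the conclusion reduces to a tail bound on the telescoping sum $Y_n - Y_0 = \sum_{k=1}^n (Y_k - Y_{k-1})$.

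The crucial step is to control the conditional range of each martingale increment $D_k := Y_k - Y_{k-1}$. Fixing values $x_1, \ldots, x_{k-1}$ and defining $g(x) := \E[f(x_1,\ldots,x_{k-1},x,X_{k+1},\ldots,X_n)]$, the independence of $X_k, X_{k+1}, \ldots, X_n$ lets one rewrite $D_k = g(X_k) - \E g(X_k')$ for an independent copy $X_k'$ of $X_k$. The bounded-differences hypothesis then gives $|g(x) - g(x')| \leq c_k$ for any $x, x' \in A_k$, so the range of $D_k$ conditional on $\mathcal{F}_{k-1}$ is at most $c_k$. This verification is the main (though mild) obstacle, since it requires disentangling the iterated expectations carefully and using independence of the $X_i$ cleanly; everything else is a formal computation.

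With bounded conditional increments in hand, the standard exponential-moment argument closes the proof. Hoeffding's lemma says that any centred random variable $Z$ with range at most $R$ satisfies $\E[e^{\lambda Z}] \leq \exp(\lambda^2 R^2 / 8)$. Applying this conditionally to each $D_k$ and iterating via the tower property yields $\E[\exp(\lambda(Y_n - Y_0))] \leq \exp\bigl(\lambda^2 \sum_k c_k^2 / 8\bigr)$. Markov's inequality with the optimal choice $\lambda = 4t / \sum c_k^2$ then produces the one-sided bound $\Pr(Y_n - Y_0 \geq t) \leq \exp(-2t^2 / \sum c_k^2)$, and running the same argument on $-f$ covers the lower tail, giving the factor of $2$ in the stated inequality. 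Since no new ideas beyond this classical argument are needed, in practice I would simply cite \cite{McDiarmid} as a black box.
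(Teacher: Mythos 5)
Your outline is the standard Doob-martingale plus Hoeffding's-lemma proof of the bounded-differences inequality, and it is correct; the paper itself does not reprove the result but simply quotes it from \cite{McDiarmid}, which is exactly the fallback you propose. So your treatment matches the paper's, and no further comparison is needed.
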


\smallskip

\begin{lemma}[$k$-out] \label{k-out}
For all integers $k>0$ and reals $\eps>0$,
there exists a constant $C>0$ such that the following holds.
Let $\calA$ and $\calB$ be disjoint nonempty sets with $\calA \cup \calB = [\nn]$.

Let $\Psi$ be a multiset of $C\nn$ (or more) uniformly random undirected edges on
$\calA \times \calB$.
Then, a.a.s., we can delete some edges from $\Psi$, and orient the remaining ones,
to give a directed bipartite multigraph $\calD$ on parts
$\calA' \subseteq \calA$ and $\calB' \subseteq \calB$,
where
$\card{\calA'} \geq (1-\eps) \card \calA$, $\card{\calB'} \geq (1-\eps) \card \calB$,
and $\calD$ is a uniformly random $k$-out bipartite multigraph
on $\calA' \times \calB'$.
The algorithm for constructing $\calD$ is polynomial time.
\end{lemma}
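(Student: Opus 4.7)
The plan is to build $\calD$ by randomly orienting each edge of $\Psi$, passing to an iteratively-defined ``core'' on which every surviving vertex has at least $k$ usable out-edges, and then sampling $k$ out-edges per core vertex to obtain a uniform $k$-out structure.

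First I would orient each edge of $\Psi$ independently and uniformly at random as either $\calA \to \calB$ or $\calB \to \calA$, producing independent multisets $\Psi_\to$ and $\Psi_\leftarrow$; conditioning on their sizes (each concentrated at $C\nn/2$), the elements of each are iid uniform on $\calA \times \calB$. I would then define the \emph{core} $(\calA^*, \calB^*)$ as the maximum pair with $\calA^* \subseteq \calA$, $\calB^* \subseteq \calB$ such that every $a \in \calA^*$ has at least $k$ edges of $\Psi_\to$ with heads in $\calB^*$, and symmetrically every $b \in \calB^*$ has at least $k$ $\Psi_\leftarrow$-edges with heads in $\calA^*$; this is computable in polynomial time by iterative pruning. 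Taking $(\calA', \calB') = (\calA^*, \calB^*)$, for each $a \in \calA^*$ I would select $a$'s $k$ out-edges as a uniformly random $k$-subset (sampled without replacement) of its $\Psi_\to$-edges with heads in $\calB^*$; likewise for each $b \in \calB^*$. All other edges of $\Psi$ are discarded.

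Two things then need to be verified. First, that $|\calA^*| \geq (1-\eps)|\calA|$ and $|\calB^*| \geq (1-\eps)|\calB|$ a.a.s.\ for $C$ sufficiently large. Each vertex's initial out-degree is $\Bin(|\Psi_\to|, 1/|\calA|)$ with mean $\Theta(C)$, so by Chernoff only an $\exp(-\Omega(C))$ fraction of vertices fail the degree-$k$ condition initially. A mean-field fixed-point analysis then bounds the fraction pruned at convergence: if $\eta$ denotes the fraction pruned from $\calB$, then the fraction pruned from $\calA$ is approximately $\Pr[\Bin(\Theta(C), 1-\eta) < k]$, which for small $\eta$ is itself $\exp(-\Omega(C))$; this forces the fixed point $\eta \leq \exp(-\Omega(C)) \leq \eps$ for $C$ large. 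Applying \Cref{Azuma} at each stage of the iterative pruning gives a.a.s.\ concentration at the mean-field value.

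Second, that conditional on $(\calA^*, \calB^*)$, $\calD$ has the distribution of a uniform $k$-out bipartite multigraph on $\calA^* \times \calB^*$. The key observation is that the iterative pruning---and therefore the identity of $(\calA^*, \calB^*)$---is invariant under permutations of $\calB^*$ applied to the heads of $\Psi_\to$-edges that land in $\calB^*$, and symmetrically under permutations of $\calA^*$. A deferred-decisions argument then shows: conditional on $(\calA^*, \calB^*)$ and on which of each $a$'s $\Psi_\to$-edges have heads in the core (but not on the precise core-bound heads), those heads are iid uniform on $\calB^*$. Sampling $k$ of them uniformly without replacement yields $k$ iid uniform heads on the opposite core, which is precisely the uniform $k$-out distribution.

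I expect the size bound to be the main technical burden: the backward ``in-degree'' of the pruning cascade is $\Theta(k) > 1$, so super-critical expansion of the initially-tiny bad set has to be ruled out, which requires $C$ large enough that the mean-field fixed point sits well below $\eps$, together with martingale concentration via \Cref{Azuma}. The uniformity argument is conceptually delicate but flows cleanly from the permutation symmetry combined with the deferred-decisions viewpoint.
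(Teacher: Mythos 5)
Your construction (random orientation, iterative pruning to a core, then sampling $k$ out-edges per surviving node) is close in spirit to the paper's proof, and your uniformity argument is essentially sound: as in the paper, the pruning trajectory can be made measurable with respect to information that does not reveal the values of the heads landing in the final core (a $\calB$-node's survival depends only on its own out-edges and on counts, never on the identity of its in-neighbours), so conditional on the core those heads remain iid uniform and a random $k$-subset gives a uniform $k$-out graph. The paper does the same thing, phrased as ``the core process revealed only edges directed into $X$''.

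The genuine gap is in the size bound, which is the heart of the lemma. Your core threshold is exactly $k$, so a vertex is pruned as soon as its number of core-bound heads drops below $k$, and you control the cascade by a mean-field fixed-point equation plus ``Azuma at each stage''. This is not a proof. First, the fixed-point equation $\eta_{\calA}\approx\Pr[\Bin(\Theta(C),1-\eta_{\calB})<k]$ always admits large solutions as well (e.g.\ everything pruned), and nothing in your sketch rules out the process cascading towards one of them; the heuristic also silently treats the pruned part of $\calB$ as a uniformly random set independent of $\calA$'s edges, whereas it is determined by those very edges, i.e.\ it is the worst such set, so one must take a union bound over all candidate pruned sets. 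Second, ``Azuma at each stage'' does not compound: the number of pruning rounds is not bounded by a constant, and for the final core size as a function of the edges the bounded-differences constant is not $O(1)$, precisely because with threshold exactly $k$ a single edge change can trigger a cascade. The paper closes exactly this hole by building in a buffer: it first trims every good vertex to out-degree exactly $k+3$ and then takes the $k$-core, so a vertex can only be pruned if at least $4$ of its out-neighbours have already been pruned. This makes the per-vertex witness probability of order $\binom{k+3}{4}(2\delta)^4$, which beats the $\binom{\nn}{\delta \nn}^2$ entropy in a first-moment bound over all witness pairs $(S,T)$ of size $\delta\nn$, showing a.a.s.\ no medium-sized avalanche exists. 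Without such a buffer (or some equivalent device, e.g.\ a careful treatment of the exponentially rare near-threshold vertices together with a subcritical branching-process domination), your argument that the pruned set stays below $\eps\card\calA$ does not go through as stated.
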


The directedness of this graph is, as touched in on \Cref{background}, just a convenient way to express that each node has exactly $k$ out-edges, whose opposite endpoints are independent.

\begin{proof}
We use a Poissonization argument.
Fix $\d = \min\set{\eps/2, 0.99\,(16 e^2 \binom{k+3}{4})^{-1/2}}$.
Let $\la$ be the value such that the probability that
a Poisson random variable $Z \sim \Po(\la)$ is
less than $k+3$ is $\d/4$;
let us write this as $\Pr(\Po(\la) < k+3) = \d/4$.
Choose $C = 2.1 \la$.

Assume that $\Po(2 \la \nn) \leq C\nn$;
the probability that this fails to hold is exponentially small in~$\nn$.
Attend only to the first $\Po(2\la \nn)$ edges of $\Psi$, 
ignoring the later ones.

Randomly orient these edges.
By the splitting property of the Poisson distribution, 
the numbers of edges out of parts $\calA$ and $\calB$ are both $\Po(\la \nn)$ and they are independent.

Suppose part $\calA$ has $m$ nodes.
Further split the edges out of \calA according to which node of \calA they are directed out of. 
Since the edges are by hypothesis random, by the splitting property of the Poisson, the number of edges $Z(v)$ out of each $v \in \calA$ is distributed as $Z(v) \sim \Po(\la \nn/m)$, and the $Z(v)$ are mutually independent over all $v \in \calA$.

Choose $t$ to be the power of 2 for which $\frac12 \nn/t < m \leq \nn/t$. 
Splitting the Poisson again, write $Z(v)$ as $Z(v) = Z_1(v)+\cdots+Z_t(v)+Z'(v)$
where every $Z_i(v) \sim \Po(\la)$, $Z'(v)$ is also Poisson,
and all are independent.
The event $Z(v) < k+3$ occurs only if every $Z_i(v) < k+3$.

Let $X$ be the number of nodes of $\calA$ with degree less than $k+3$.
From the above,
$X
 =\sum_{v \in \calA} \onex(Z(v) < k+3)
 \leq \sum_{v \in \calA} \one{ (Z_1(v) < k+3) \wedge \cdots \wedge (Z_t(v)<k+3) }
 $,
so the event $X \geq \d m$ requires that,
of all $t m$ independent probability-$\d/4$ events, at least $t \d m$ hold.
Then,
$\Pr(X \geq \d m)
 \leq \Pr(\Bin(t m, \d/4) \geq t \d m)
 \leq \Pr(\Bin(\nn, \d/4) \geq \d \nn/2)
$.
With $\d$ constant,
this has probability exponentially small in~$\nn$.

Then, with exponentially small failure probability, all but a $1-\delta$ fraction of nodes in both $\calA$ and (by the same argument, and a union bound) in $\calB$
have at least $k+3$ outedges.
For any node with more than $k+3$ outedges,
randomly delete some to leave exactly $k+3$.

\medskip

Let $\calD_0$ be the resulting directed bipartite graph.
Let $X_0$ be the nodes
failing to have outdegree $k+3$.
Let $\calD$ be the $k$-core of $\calD_0 \setminus X_0$.
$\calD$ can be constructed from $\calD_0$ by initialising $X=X_0$,
successively adding to $X$ any node with
less than $k$ outneighbours in $\calD_0 \setminus X$,
or equivalently with at least $4$ outneighbours in $X$.
When there are no more such nodes,
$\calD$ is the subgraph of $\calD_0$ induced by
$\calA' = \calA \setminus X$ and $\calB' = \calB \setminus X$.

We claim that, at the end,
$\card{X \cap \calA} < 2 \d \card \calA$ and $\card{X \cap \calB} < 2 \d \card \calB$ a.a.s.
By hypothesis, $\d \leq \eps/2$, so this condition immediately implies the lemma's conclusion.
Suppose the condition does not hold.
At any time, let $S$ and $T$ be the nodes added in parts $\calA$ and, respectively, $\calB$
that bring $X_0$ to $X$.
(I.e., $S = (X \cap \calA) \setminus (X_0 \cap \calA)$.)
For the condition to be violated,
at the end we must have $\card S \geq \d \card \calA$ or $\card T \geq \d \card \calB$.
Consider the first moment that either event first occurs.
There are two symmetric cases, and we consider just the one where,
at that moment,
$\card S = \d \card \calA$ (up to integrality) while $\card T < \d \card \calB$.
Note that every node in $S$ has at least 4 outneighbours in $T \cup (X_0 \cap \calB)$.
This remains true if we expand $T$ arbitrarily (but disjoint from $X_0$)
so that $\card T = \d \card \calB$.
The probability that any such pair $S,T$ exists is at most the expected number
of such pairs, over all fixed subsets and subject to the randomness of $\calD$.
For a fixed $v \in S$, the probability $v$ has at least $4$ outneighbours in $T \cup (X_0 \cap \calB)$ is
at most $\binom{k+3}{4} \left(\ttfrac{\card {T \cup (X_0 \cap \calB)}\;}{\;\card \calB}\right)^4
  = \binom{k+3}{4} (2\d)^4$.
Then, the expected number of pairs $S,T$ is at most
\begin{align*}
\binom{\nn}{\d \nn} \binom{\nn}{\d \nn} \left(\binom{k+3}{4} (2\d)^4\right)^{\d \nn}
 & \leq \left(\frac{e \nn}{\d \nn}\right)^{2 \cdot \d \nn}
        \left(\binom{k+3}{4} (2\d)^4\right)^{\d \nn}
\\&= \left(e^2 \binom{k+3}{4} 2^4 \d^2\right)^{\d \nn}
\leq 0.99^{\d \nn}
= o(1),
\end{align*}
using the hypothesis on $\d$.

So far, we have a graph $\calD$ on parts $\calA'$ and $\calB'$ of the claimed cardinalities.
Our implementation of the core process revealed only edges directed into $X$,
so the edges in $\calD$ remain completely random.
Each node in $\calD$ has outdegree between $k$ and $k+3$.
Reveal the outdegree of each node;
if a node has outdegree greater than $k$, randomly delete outedges
so that it has degree exactly $k$.
The remaining edges are still uniformly random,
and $\calD$ is precisely $k$-out.
\end{proof}

\begin{claim} \label{1vx}
In a uniformly random $k$-out digraph with parts $\calA$ and $\calB$ both of size $\Theta(\nn)$,
the number of nodes with any double inedge is at most $\nn^{2/3}$,
with failure probability of order
$\exp(-\Omega(\nn^{1/3}))$.
\end{claim}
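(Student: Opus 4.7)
The plan is to show $\E X = O(1)$ and then apply Theorem~\ref{Azuma} to get sharp concentration around the mean.

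\textbf{Step 1 (expectation).} For a fixed target node $v \in \calB$, and a fixed source $u \in \calA$, the probability that $u$'s $k$ i.i.d.\ uniform out-choices include $v$ at least twice is at most $\binom{k}{2}/\card{\calB}^2$. Summing over the $\card{\calA}$ sources, the probability that $v$ has any double inedge is at most $\card{\calA}\binom{k}{2}/\card{\calB}^2 = O(1/\nn)$. Summing over $v \in \calA \cup \calB$ (the case $v\in\calA$ being symmetric) gives $\E X = O(1)$.

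\textbf{Step 2 (bounded differences).} View the randomness as $\card{\calA}+\card{\calB} = \Theta(\nn)$ mutually independent $k$-tuples $Y_u$, one per node, recording $u$'s out-choices. I claim that changing a single $Y_u$ alters $X$ by at most $k$. Indeed, the in-edges of $u$ itself, and of every node on the same side as $u$, do not depend on $Y_u$, so only nodes opposite $u$ can change status; among these, only nodes receiving a double out-edge from $u$ under either the old or the new $Y_u$ can be affected, and any single value of $Y_u$ produces at most $\lfloor k/2 \rfloor$ such targets (each consumes at least two of $u$'s $k$ out-edges). Hence we may take $c_u = k$ in Theorem~\ref{Azuma}, with $\sum c_u^2 = \Theta(\nn)\cdot k^2 = \Theta(\nn)$ since $k$ is constant.

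\textbf{Step 3 (Azuma).} Applying Theorem~\ref{Azuma} gives
\[
\Pr\bigl(X \geq \E X + t\bigr) \leq 2\exp\bigl(-\Omega(t^2/\nn)\bigr).
\]
Setting $t = \nn^{2/3} - \E X = (1-o(1))\nn^{2/3}$ yields $\Pr(X \geq \nn^{2/3}) \leq \exp(-\Omega(\nn^{1/3}))$, which is exactly the required failure probability.

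The only subtle point is the bounded-difference verification in Step~2; once the constant $c_u = k$ is pinned down, the required exponent $\nn^{1/3}$ emerges from matching $t^2/\nn$ to $\nn^{1/3}$ at $t \approx \nn^{2/3}$. The expectation calculation and Azuma application are routine.
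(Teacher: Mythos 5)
Your proposal is correct and follows essentially the same route as the paper: a first-moment bound showing the expected number of nodes with a parallel inedge is $O(1)$, followed by the bounded-differences (Azuma/McDiarmid) inequality to get failure probability $\exp(-\Omega(\nn^{1/3}))$ at the threshold $\nn^{2/3}$. The only cosmetic difference is that the paper takes each individual outedge as a coordinate with effect at most $1$, whereas you group the $k$ out-choices of each node into one coordinate with effect at most $k$; both give $\sum c_u^2 = \Theta(\nn)$ and the same conclusion.
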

\begin{proof}
We consider double inedges to part~\calB; the symmetric argument applies to part~$\calA$.
Call a node in \calB ``bad'' if it has a multiple inedge.
Consider the outedges of a node $a \in \calA$ in sequence.
If an edge duplicates a previous one (and is the first to do so),
it results in a bad \calB node.
The expected number of such duplications on $a \in \calA$ is $O(1/\nn)$,
for a total expected $O(1)$ duplications over all $a \in \calA$.
Changing an outedge of any $a \in \calA$ changes the number of bad $\calB$ nodes by at most~1.
Azuma's inequality yields the claim.
\end{proof}

\begin{claim} \label{2vxs}
For any constants $k$ and $d$, 
in any $k$-out digraph with parts $\calA$ and $\calB$ both of size $\Theta(\nn)$, 
$d$ random nodes in $\calB$ have disjoint inneighbourhoods with probability $1-O(1/\nn)$, and the same holds for $d$ random nodes in~\calA.
\end{claim}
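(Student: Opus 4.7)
The plan is a double counting argument: I will bound the number of pairs of $\calB$-nodes whose in-neighbourhoods overlap, show this is much smaller than the total number of pairs, and then union bound over the $\binom{d}{2}$ pairs among the $d$ chosen nodes. No structural randomness of the graph is needed; the randomness is entirely over the selection of the $d$ nodes, which is why the claim can be stated for \emph{any} $k$-out digraph.

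First, I would bound the number of ``bad'' pairs in $\calB$, meaning unordered pairs $\{b_1,b_2\}\subseteq\calB$ that share a common in-neighbour. Since the digraph is $k$-out, every $a\in\calA$ has exactly $k$ out-edges, and hence can be the common in-neighbour of at most $\binom{k}{2}$ pairs in $\calB$. Summing over $a$, the number of bad pairs in $\calB$ is at most $\binom{k}{2}\card{\calA}=O(\nn)$, using $\card{\calA}=\Theta(\nn)$.

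Next, since $\card{\calB}=\Theta(\nn)$, the number of pairs in $\calB$ is $\binom{\card{\calB}}{2}=\Theta(\nn^2)$. When we sample $d$ distinct nodes of $\calB$ uniformly at random, any fixed pair among them is a uniformly random unordered pair from $\calB$, and so lies in the bad set with probability at most $\binom{k}{2}\card{\calA}/\binom{\card{\calB}}{2}=O(1/\nn)$. A union bound over the $\binom{d}{2}=O(1)$ pairs among the $d$ sampled nodes shows that, with probability $1-O(1/\nn)$, no two of them share an in-neighbour; equivalently, their in-neighbourhoods are pairwise disjoint. The argument for $d$ random nodes in $\calA$ is identical by symmetry, swapping the roles of $\calA$ and $\calB$ and using that each $b\in\calB$ has exactly $k$ out-edges.

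There is essentially no real obstacle: the argument is a one-line counting step followed by a union bound, and the constants $k,d$ enter only through $\binom{k}{2}$ and $\binom{d}{2}$, both $O(1)$. The only small point to be careful about is whether ``$d$ random nodes'' are sampled with or without replacement; either is fine, since the event of interest is unaffected when the samples collide (a collision only creates trivially non-disjoint in-neighbourhoods but happens with probability $O(1/\nn)$ anyway), and both are absorbed into the same $O(1/\nn)$ bound.
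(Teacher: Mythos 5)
Your proof is correct and is essentially the paper's argument: both exploit that each node of $\calA$ has only $k$ out-neighbours and finish with a first-moment/union bound, the paper summing over $a\in\calA$ the probability that two of the $d$ sampled nodes land in $\Np(a)$, while you equivalently count the $O(\nn)$ ``bad'' pairs first and then bound the chance a sampled pair is bad. The with/without replacement remark is a fine way to handle the same minor issue the paper treats implicitly.
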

\begin{proof}
For a given $a \in \calA$, if two of the $d$ nodes in \calB have $a$ as a common inneighbour, both are in $\Np(a)$.
Since $\card{\Np(a)} = k$ while $\card \calB = \Theta(\nn)$,
this has probability $O(1/\nn^2)$.
Taking the union bound over all $a \in \calA$,
the probability that any two $d$ nodes have a common inneighbour is $O(1/\nn)$.
\end{proof}

\begin{lemma}[expansion] \label{expansion}
For any $c,k,\d>0$, let $\etad = \max\set{\frac{17}{3}\d \ln(e/\d),2\frac{1-c}{c}k\d}$.
Let $\calD$ be a  uniformly random $k$-out bipartite multigraph
 on $\calA \times \calB$,
with $\calA \cup \calB = [\nn]$
and $\card \calA , \card \calB \geq c \nn$.
Then, a.a.s.,
for every $S \subseteq [\nn]$ with $\card S \leq \d \nn$,
$$\card{\Nm_\calD(S)} \leq \sum_{v \in S} \card{\Nm_\calD (v) } \leq \etad \, \nn . $$
\end{lemma}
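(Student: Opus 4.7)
The plan is to reduce to a tail inequality for a sum of independent binomials, apply Chernoff, and close with a union bound over subsets; the two terms in the definition of $\etad$ will be calibrated precisely so that the resulting exponent is negative at the worst case $|S| = \d\nn$.

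First I would dispose of the left inequality $|\Nm_\calD(S)| \leq \sum_{v \in S}|\Nm_\calD(v)|$, which is immediate from the union bound $\Nm_\calD(S) = \bigcup_{v \in S}\Nm_\calD(v)$. For the substantive right inequality, split $S$ into $S_\calA := S \cap \calA$ and $S_\calB := S \cap \calB$ and write
\[
\sum_{v \in S}|\Nm_\calD(v)| \;=\; e_\calD(\calB, S_\calA) + e_\calD(\calA, S_\calB) \;=:\; X_S,
\]
the number of directed edges of $\calD$ with head in $S$. By the uniform $k$-out structure, $X_S$ is a sum of two independent binomials $\Bin(k|\calB|, |S_\calA|/|\calA|)$ and $\Bin(k|\calA|, |S_\calB|/|\calB|)$. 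Using $|\calA|, |\calB| \geq c\nn$ (so both ratios $|\calB|/|\calA|, |\calA|/|\calB|$ are at most $(1-c)/c$), we have $\E[X_S] \leq \la s$ where $\la := k(1-c)/c$ and $s = |S|$.

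Next I would apply the Chernoff bound $\Pr[X \geq t] \leq (e\mu/t)^t e^{-\mu}$ (with $\mu = \E[X]$, $t \geq \mu$) and use the monotonicity of its RHS in $\mu \in [0,t]$ to replace $\mu_S$ by its uniform upper bound $\la s$, obtaining
\[
\Pr[X_S \geq \eta\nn] \;\leq\; \bigl(e\la s/(\eta\nn)\bigr)^{\eta\nn}\, e^{-\la s}.
\]
A union bound over the $\binom{\nn}{s} \leq (e\nn/s)^s$ subsets of size $s$ yields exponent
\[
f(s) \;:=\; s\ln(e\nn/s) + \eta\nn\ln\bigl(e\la s/(\eta\nn)\bigr) - \la s,
\]
and a quick derivative check, using $\eta \geq 2\la\d$, shows $f'(s) > 0$ on $(0, \d\nn]$, so it suffices to verify $f(\d\nn) \leq -\Omega(\d\nn)$, i.e.,
\[
\d\ln(e/\d) + \eta\ln(e\la\d/\eta) - \la\d \;\leq\; -\Omega(\d).
\]

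The last step is the case split forced by the max defining $\etad$. In Case~A, $\la \leq \tfrac{17}{6}\ln(e/\d)$ and $\eta = \tfrac{17}{3}\d\ln(e/\d)$; writing $u = \la/\ln(e/\d) \in (0, 17/6]$, the bracket reduces to $\d\ln(e/\d)\cdot h(u)$ with $h(u) := 1 + \tfrac{17}{3}\ln(3eu/17) - u$, which is increasing on $(0, 17/3)$, so the remaining task is to check $h(17/6) < 0$, i.e., $\tfrac{23}{6} - \tfrac{17}{3}\ln 2 < 0$ (a direct numerical verification). In Case~B, $\la > \tfrac{17}{6}\ln(e/\d)$ and $\eta = 2\la\d$, so $\ln(e\la\d/\eta) = 1-\ln 2$ and the bracket simplifies to $\d[\ln(e/\d) - \la(3-2\ln 2)]$, strongly negative under the case hypothesis. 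I expect the main obstacle to be the calibration of the constant $17/3$: it is chosen precisely so that Case~A just barely closes at the crossover $u = 17/6$, and the $e^{-\la s}$ factor coming from the sharper form of Chernoff is essential — without it Case~A would fail at the boundary.
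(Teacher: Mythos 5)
Your proof follows essentially the same strategy as the paper's: bound $\sum_{v\in S}\card{\Nm_\calD(v)}$ by the number of edges directed into $S$ (in a multigraph this is an inequality rather than your stated equality, but that only helps), observe that this count is a sum of two independent binomials with mean at most $\la\card S$ where $\la=k\frac{1-c}{c}$, apply a Chernoff-type bound, and finish with a first-moment union bound over subsets. The execution differs: the paper reduces to sets of size exactly $\d\nn$ by monotonicity and uses the Bernstein-type inequality $\Pr(Z\ge \E Z+t)\le\exp\left(-\tfrac{t^2}{2(\E Z+t/3)}\right)$, so the two terms of the max enter uniformly (via $r=\eta/(\la\d)\ge 2$) and no case split is needed; you instead union over all sizes (your derivative check of $f$ using $\eta\ge 2\la\d$ is correct) and use the multiplicative bound $(e\mu/t)^te^{-\mu}$ with an explicit two-case calibration. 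Your Case~A is right: $h(17/6)=\tfrac{23}{6}-\tfrac{17}{3}\ln 2\approx-0.094<0$, and your remark that the $e^{-\la s}$ factor is indispensable there is also correct. One slip in Case~B: with $\eta=2\la\d$ the bracket is $\d\ln(e/\d)+2\la\d(1-\ln 2)-\la\d=\d\left[\ln(e/\d)-\la(2\ln 2-1)\right]$, not $\d\left[\ln(e/\d)-\la(3-2\ln 2)\right]$ (you flipped the sign of the middle term). The argument still closes, because the case hypothesis $\la>\tfrac{17}{6}\ln(e/\d)$ together with $\tfrac{17}{6}(2\ln 2-1)\approx 1.094>1$ gives negativity --- but at the crossover the margin is the same thin $\approx-0.094\,\d\ln(e/\d)$ as in Case~A, not ``strongly negative''; it only strengthens as $\la$ grows past the crossover. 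With that correction the proposal is a valid, if more delicately calibrated, proof of the lemma.
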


That is, any set of up to a $\d$ fraction of the nodes expands to at most an $\etad$ fraction.
Note:
\begin{itemize}
\item
The lemma refers to all subsets $S$ of size at most $\d \nn$ simultaneously,
not merely any given subset.
\item
We can make $\etad$ arbitrarily small by choosing $\d$ sufficiently small.
\item
The lemma implies that sets of size $\d \nn$
expand by a factor at most $\etad/\d$.
This is false for smaller sets:
there will typically be a handful of nodes of arbitrarily high indegree.
\item
The proof below uses the first-moment method.
A slightly smaller $\etad$
can be obtained by choosing (in the Poisson limit) a degree threshold above which there are at least $\d \nn$ nodes (thus encompassing the worst-case set $S$),
and considering the total of those nodes' degrees
(concentrated, and at least $\card{\Nm(S)}$).
\end{itemize}

\begin{proof}
Assume, without loss of generality, that $\card \calA \leq \card \calB$.
Let $\deg^-(v)$ denote the indegree of $v$.
For any set $S \in [\nn]$, define the volume
$\vol(S) := \sum_{v \in S} \deg^-(v)$
to be the number of edges directed into $S$.

For any set $S$,
the neighbourhood size $\card{\Nm_\calD(S)}$ is at most $\vol(S)$;
this establishes the lemma's first inequality.
For the second,
it is enough for prove the statement for
sets $S$ of size $\d \nn$.

Let $\rho = \frac{1-c}c \geq \max\set{\frac{\card \calA}{\card \calB}, \frac{\card \calB}{\card \calA}}$.
Consider a given set $S$ of size $\card S = \d \nn$, fixed with knowledge of \calA and \calB but not the random edges.
Each edge out of $B$ may be an inedge of $S \cap A$, so
\begin{align*}
    \vol(S \cap \calA) \sim 
     \Bin \left( k \card B \; , \; 
        \ttfrac{\card{S \cap \calA} }{ \card \calA} \, \right) .
\end{align*}
The symmetric statement for $\vol(S \cap \calB)$ and the two volumes are independent, so 
$\vol(S)$ is a sum of independent Bernoulli random variables with expectation
\begin{align*}
\E(\vol(S)) &= 
  k \card{S \cap \calA} \, \tfrac{\card \calB}{\card \calA} + 
  k \card{S \cap \calB} \, \tfrac{\card \calA}{\card \calB}
  \;\leq k \; \card S \max\set{\tfrac{\card \calA}{\card \calB},\tfrac{\card \calB}{\card \calA}}
  \; \leq \; \rho k \d N .
 \end{align*}
For convenience we may artificially add more independent Bernoulli variables 
to obtain a random variable $Z$ stochastically dominating $\vol(S)$ and with $\E Z = \rho k \d \nn$.

Let $\eta = \etad$.
Apply the Chernoff-type inequality
$
\Pr(Z \ge \E Z + t) \le \exp \left( - \frac {t^2}{2(\E Z + t/3)} \right)
$
\cite[eq.~(2.5) and Theorem~2.8]{JLR}
to $\Pr(Z \geq \eta \nn)$.
This means taking $t = \eta \nn - \E Z = (r-1) \rho k \d \nn$,
where $r=\eta /(\rho k \d ) \geq 2$ by definition of $\etad$.
Then,
\begin{align*}
\Pr(Z \ge \eta \nn)
 &\leq
 \exp \left( - \frac {(r-1)^2(\E Z)^2} {2(1+\frac{r-1}{3}) \E Z} \right)
 \; \leq \;
 \exp \left( - \frac {(r/2)^2} {2(\frac{2r}{3})} \E Z \right)
 \; = \;
 \exp \Big( -\tfrac{3}{16} r \cdot \rho \d k \nn \Big)
 .
\end{align*}
The number of choices for the set $S$ is
\begin{align*}
\binom \nn {\d \nn}
  \leq \left( \frac{e \nn}{\d \nn} \right)^{\d \nn}
  = \exp( \d \nn \ln(e/\d)) .
\end{align*}
By the first-moment method, the probability that any set $S$ violates the claimed condition is at most
\begin{align*}
\binom \nn {\d \nn} \Pr(Z \ge \eta \nn)
 &\leq
 \exp \Big( \d \nn \cdot \Big( \ln(e/\d) - \frac{3}{16} r \rho k \Big) \Big)
 \\&=
 \exp \Big( \d \nn \cdot \Big( \ln(e/\d) - \frac{3}{16} \frac{\eta}{\d} \Big) \Big)
 = \exp(-\Omega(\nn))
 \; = \;
 o(1) ,
\end{align*}
the penultimate equality by definition of $\etad$.
\end{proof}

\begin{lemma}[Hall] \label{Hall}
Given $k \geq 10$ and
$\rx < 1- 2/(1+3/e)$. (Having $\rx \leq 0.049$ suffices.)
Let $\calD$ be a uniformly random $k$-out bipartite digraph on parts $\calA$ and $\calB$.
Consider the subgraph $\calD'$ induced by a pair of subsets
$\calA' \subseteq \calA$, $\calB' \subseteq \calB$,
where
\begin{enumerate}[(i)]
\item $\card{\calA'} + \card{\calB'} \geq (1-\rx) (\card \calA + \card \calB)$, \label{Hall1}
\item $\card{\calA'} = \card{\calB'}$; and \label{Hall2}
\item every node in $\calD'$ has outdegree at least $k-1$. \label{Hall3}
\end{enumerate}
\noindent
Then, a.a.s.\ in $\card \calA$
(with probability tending to 1 as $\card\calA \to \infty$)
every such induced subgraph $\calD'$
contains a perfect matching.
\end{lemma}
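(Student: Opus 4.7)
The plan is to verify Hall's condition for the bipartite graph $\calD'$ by a first-moment union bound. Suppose, for contradiction, that for some valid $(\calA',\calB')$ there is $S \subseteq \calA'$ with $|N_{\calD'}(S)| < |S|$, and let $T \subseteq \calB'$ be any set with $N_{\calD'}(S) \subseteq T$ and $|T| = |S| - 1$. By condition~(iii), each $v \in S$ has at most one of its $k$ out-edges outside $\calB'$; combined with $N_{\calD'}(S) \subseteq T$, every in-$\calB'$ out-edge of $v$ lies in $T$. Hence in the underlying random graph $\calD$, each $v \in S$ has at least $k - 1$ out-edges in $T$. In particular $|S| \geq 2$: the case $|S| = 1$ would force $k - 1 \geq 9$ out-edges into the empty set $T$, which is impossible.

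For a fixed pair $(S, T)$ of sizes $(s, s-1)$, the $s$ out-edge sequences are mutually independent, and for each $v$ the probability that $\geq k - 1$ of its $k$ i.i.d.\ uniform out-edges land in $T$ is at most $k(|T|/|\calB|)^{k-1}$. So the joint probability is at most $[k(|T|/|\calB|)^{k-1}]^s$. A union bound over all such $(S,T)$, together with the symmetric bound obtained by swapping $\calA \leftrightarrow \calB$ (to cover failures whose minimal witness lies on the $\calB'$ side), gives
\[
\Pr[\text{Hall fails}] \;\leq\; 2\sum_{s=2}^{\min(|\calA|,|\calB|)}\binom{|\calA|}{s}\binom{|\calB|}{s-1}\Bigl[k\Bigl(\tfrac{s-1}{|\calB|}\Bigr)^{k-1}\Bigr]^{s}.
\]

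To show this sum is $o(1)$, I would use $\binom{x}{y} \leq (ex/y)^y$ to bound each summand by approximately $\alpha \cdot [e^{2} k \gamma \alpha^{k-3}]^{s}$ with $\alpha = s/|\calB|$ and $\gamma = |\calA|/|\calB|$. Conditions~(i) and~(ii) force $\gamma$ to lie within $[(1-\rx)/(1+\rx),\,(1+\rx)/(1-\rx)]$, close to~$1$. To handle the full range of $s$, I would invoke the standard K\"onig-style complementation: any left failure with $|S| > m/2$ (where $m = |\calA'| = |\calB'|$) yields a right-side failure via $(T^{*}, S^{*}) = (\calB' \setminus N_{\calD'}(S),\; \calA' \setminus S)$, where each $v \in T^{*}$ has $\geq k - 1$ out-edges in the small set $S^{*}$ of size $m - |S| < m/2$ (using (iii) again). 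When $m - |S|$ is a small constant this is rendered negligible by sparse-parallel-edge counts analogous to Claim~\ref{1vx}; when $m - |S|$ is linear it is caught by the symmetric half of the union bound above with the small parameter playing the role of $|T|$. The dominant contribution thereby comes from $\alpha \lesssim 1/2$; for $k \geq 10$ the exponential base $e^{2}k(1/2)^{k-3} = 10 e^{2}/128 < 1$, so each summand decays exponentially in $s$ and the total is $o(1)$.

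The main obstacle is precisely this Step~4 estimate: uniformly controlling the base of the exponential across all permissible $s$ and ratios $|\calA|/|\calB|$, and in particular reassigning the degenerate large-$s$ regime (where $(s-1)/|\calB|$ approaches~$1$) to a benign small-parameter regime on the mirror side. The explicit thresholds $k \geq 10$ and $\rx < 1 - 2/(1 + 3/e) \approx 0.049$ arise from calibrating the exponential base $e^{2} k \gamma \alpha^{k-3}$ to stay strictly below~$1$ over the admissible parameter window.
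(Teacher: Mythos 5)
Your proposal is correct and follows essentially the same route as the paper's proof: a first-moment bound over Hall witnesses $(S,T)$ with $|T|=|S|-1$, using hypothesis (iii) to force at least $k-1$ out-edges of each $v\in S$ into $T$, the complementation trick to restrict to witness sets of size at most about half of $|\calA'|$, conditions (i)--(ii) (via the bound on $\rx$) to control $|\calA|/|\calB|$, and the calibration $k\geq 10$ so that the exponential base at $\alpha\approx 1/2$ stays below $1$ (the retained prefactor $\alpha=s/|\calB|$ is what makes the geometric sum $o(1)$ rather than $O(1)$, matching the paper's $\tfrac{s}{n}$ factor). The only cosmetic differences are that your displayed sum nominally runs over all $s$ before the prose restricts it, and your special-casing of $m-|S|=O(1)$ via parallel-edge counts is unnecessary since the mirrored union bound already covers every $s'\geq 2$ (and $m-|S|=0$ is impossible).
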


Note that here ``a.a.s.\ in $\card \calA$'' is equivalent to a.a.s.\ in 
$\card \calB$, $\card{\calA'}$, or $\card{\calB'}$.
Also, the hypothesis $k \geq 10$ can be weakened to $k \geq 7$, at the expense of a slightly more complicated proof, and likely further.

\begin{proof}
Let $n := \card{\calA'} = \card{\calB'}$.
If $\calD'$ does not have a perfect matching (ignoring the edge directions)
then there are Hall sets $S \subseteq \calA'$ and $T \subseteq \calB'$
such that $N(S) \subseteq T$ and $\card T = \card S-1$.
Write $\bar S$ for $\calA' \setminus S$ and $\bar T$ for $\calB' \setminus T$.
If $S,T$ is a Hall pair, so is $\bar T, \bar S$.
Since $\card S + \card{\bar T}= n+1$,
at least one of them must be at most $(n+1)/2$.
By symmetry, it suffices to consider a Hall pair $S,T$
where $S \subseteq \calA$ has cardinality $s \leq (n+1)/2$.

Henceforth we ignore $\calA'$ and $\calB'$.
We are simply interested in the existence of a Hall pair $S,T$
with $S \subseteq \calA$, $T \subseteq \calB$,
$\card S = s \leq (n+1)/2$, and $\card T = s-1$.
The probability that $\calD$ allows such a pair
is at most the expected number of such pairs,
which is at most the number of pairs times the probability
that a fixed pair has the Hall property.

Since $2n = \card {\calA'} + \card {\calB'} \geq (1-\rx) (\card{\calA}+\card{\calB})$,
we have that
$\card{\calA}+\card{\calB} \leq \frac{2}{1-\rx} n$
and thus
\begin{align*}
\card \calA
 & \leq  \tfrac{2}{1-\rx} n - \card \calB
 \; \leq \; \tfrac{2}{1-\rx} n - \card{\calB'}
 \; = \; \parens{ \tfrac{2}{1-\rx} -1} n
 \; \leq \; \tfrac{3}{e} n ;
\end{align*}
the last inequality follows from the hypothesis on $\rho$ and is used in the next calculation.
The same holds for $\calB$.
Also, $\card \calA$ and $\card \calB$ are both $\Theta(n)$,
and henceforth we work asymptotically in $n$.

For each $v \in S$, all but one of $v$'s outedges must be directed into $T$.
The probability that there is such a Hall pair $S,T$, then,
is at most the number of choices of
$S \subseteq \calA$ with $\card S = s$,
of $T \subseteq \calB$ with $\card T = s-1$,
and (by hypothesis~\ref{Hall3}) of one edge per element of $S$,
times the probability that such a combination has
all other edges of $S$ directed into $T$.
The probability there is any such pair is at most
\begin{align}
\binom{k}{1}^s \binom{(3/e)n}{s} \binom{(3/e)n}{s-1} \left(\frac{s-1}n\right)^{(k-1)s}
&\leq
k^s \parens{\frac {3n} s}^{s} \parens{\frac {3n} {s-1}}^{s-1} \parens{\frac{s-1}n}^{(k-1) s}  \notag
\\ &\leq
(9k)^s  \parens{\frac{s-1}n}^{(k-3) s+1} \notag
\\ &\leq
\frac s n \parens{ 9k \parens{\frac{s-1}n}^{k-3}}^s \label{smalls}
.
\end{align}

The sum of \eqref{smalls} over $s$ from $2$ to $10 \ln n$,
even ignoring the power $s$, is
$O(\ln n) \cdot O\parens{\parens{\frac{\ln n}n}^{k-2}} = o(1)$.
For larger $s$, up to
$s=(n+1)/2$, we have $(s-1)/n < 1/2$,
so, since $k \geq 10$,
$
9k(\tfrac{s-1}n)^{k-3}
  \; \leq \; 9k (\tfrac12)^{k-3}
  \; < \; 0.8
  \;\ < \; \exp(-1/5)
$.
Each term in \eqref{smalls} is thus of order $O(e^{-s/5}) = O(e^{-2 \ln n})=O(n^{-2})$,
for a total of $O(1/n)=o(1)$.
\end{proof}

\section{Matching Strategy and Analysis} \label{SRHmatching}

We construct a hypergraph $H$, initially empty and eventually containing a perfect matching $\calM$, on vertex set $V=[n]$;
we assume that $n$ is divisible by 3.
Partition $V$ into a set $A_0$ of vertices called ``apexes'', and a set $B_0$ of pairs of vertices, called ``base pairs'' or just ``bases'', with
\begin{align}
  \card {A_0} &= \frac{n}{3}+4 \gapd n &
  \card {B_0} &= \frac{n}{3} - 2 \gapd n , \label{cardAB}
\end{align}
where $\gapd$ is a constant chosen small enough to satisfy various claims below.
The two members of a base pair are sometimes called ``partners''.
Integrality is not an issue; all that needs to be exact is that
$\card{A_0}+2\card{B_0}=n$.

\medskip

As a warmup exercise, consider the 1-offer model, and take the set sizes to be $\card{A_0}=\card{B_0}=n/3$. 
Construct a random 2-out bipartite graph as follows. 
Focussing on the first point $a_1 \in A_0$, on offer of any $b \in \bb \in B_0$, choose as third point the partner $b'$ of $b$ in $\bb$, construct the hyperedge $\set{a_1} \cup \bb$, and correspondingly add the edge $\set{a_1,\bb}$ to the auxiliary graph. 
Repeat this to build a second random auxiliary graph edge on $a_1$, and the corresponding hyperedge; then repeat to build two edges on $a_2$, on $a_3$, and so forth.
Similarly, focussing on the first point $\bb_1 \in B_0$, on offer of any $a \in A_0$, build the hyperedge $\set{a,\bb_1}$ and the corresponding auxiliary graph edge $(a,\bb_1)$, repeating to make two such random edges (and corresponding hyperedges) on each node in $B_0$.
This can be completed in linear time.
(It will take about $2n$ trials to be offered $\frac23 n$ nodes in $A_0$ to make the requisite edges to $B_0$, and about $n$ trials to be offered $\frac23 n$ points in nodes of $B_0$ to make the edges to $A_0$, and these can even be done in parallel.)
At this point the auxiliary graph is a random 2-out graph and thus a.a.s.\ has a perfect matching. 
By construction, the corresponding hyperedges form a hypergraph perfect matching, as desired. 

In the 2-offer model we will be unable to make \emph{every} node of $A_0$ and $B_0$ $k$-out, 
and will thus take the 3-phase approach outlined in \Cref{lemmas}.

\subsection{Phase 1: A robust matching structure}
In this phase we construct the digraph $D_1$ introduced above,
and in tandem the corresponding hypergraph $H$.

For a sequence of
semirandom offers, do the following.
For each offer of the form $\set{a,b}$,
where $a \in A_0$ and $b \in \bb \in B_0$ is a member of a base pair,
choose as third point the partner of $b$.
This defines a hyperedge $\set{a} \cup \bb$, which we add to our hypergraph $H$,
and an (undirected) auxiliary graph edge
$\set{a,\bb}$, which we add to a set $\Psi$.
Semi-random offers not of the specified form are ignored.
(Recall what this means from the beginning of Section~\ref{background}.)

Note that each $(a,\bb)\in \Psi$ is uniformly random in $A_0 \times B_0$, with replacement.
Given $\gapd$ and $k$, let $C$ be the corresponding value in \Cref{k-out}.
Each semirandom offer is of the desired form w.p.\ (with probability)
$\frac13 \cdot \frac23+\frac23 \cdot \frac13 +O(\gapd) = 4/9+O(\gapd) > 1/3$
for $\gapd$ sufficiently small,
so in $3Cn$ semirandom offers,
a.a.s.\ there are at least $Cn$ of the stipulated form,
exceeding the lemma's $CN$ (since by \eqref{cardAB}, $N$ is about $\frac23 n$).

Should this a.a.s.\ statement fail,
we consider the entire construction to have failed.
We will take the same approach throughout,
proceeding on the assumption that all the a.a.s.\ statements hold.

It follows from \Cref{k-out} that from this set $\Psi$ of undirected edges
we can a.a.s.\ construct a bipartite digraph $D_1$
on parts $A_1 \subseteq A_0$ and $B_1 \subseteq B_0$,
with $\card{A_1} \geq (1-\gapd) \card{A_0}$ and
$\card{B_1} \geq (1-\gapd) \card{B_0}$,
and $D_1$ is uniformly $k$-out on these node sets.

By construction, $H$ contains the hyperedges corresponding to the edges of $D_1$.
We will discard (or ignore) all other hyperedges in $H$
so that there is a one-to-one correspondence between
the edges in $D_1$ and the hyperedges in $H$:
they are two representations of the same thing.

Let $X_1 = (A_0 \bk A_1) \cup (B_0 \bk B_1)$ be the set of
``failed'' nodes missing from $D_1$.
By Lemma~\ref{k-out} and~(\ref{cardAB}), we have
\begin{align}
\card{A_1} &\geq (1-\gapd)\card{A_0} > \frac{n}{3}+3\gapd n    \label{cardA1} \\
\card{B_1} &\geq (1-\gapd)\card{B_0} > \frac{n}{3}-3\gapd n    \label{cardB1} \\
\card{X_1} &\leq \gapd(\card{A_0}+\card{B_0}) < \gapd n    \label{cardX1} .
\end{align}

Let $\Xflat_1$ be the set of all the points (hypergraph vertices) contained in $X_1$, so $\Xflat_1 = (A_0 \cap X_1) \cup \bigcup_{\bb \in (B_0 \cap X_1)} \bb$.
Letting $\ax=\card{A_0 \cap X_1}$ and $\bx  = \card{B_0 \cap X_1}$,
note that
\begin{align}
\tcard{\Xflat_1} &= \ax+2\bx .     \label{cardXflat}
\end{align}

\subsection{Phase 2: Matching exceptional points}
This phase will place every point of $\Xflat_1$ into a hyperedge to be included in the matching~$\calM$,
while keeping $D$ robust enough that it will contain a matching.

At the beginning of this phase, $A=A_1$, $B=B_1$, $D=D_1$, $X=X_1$, and $\Xflat = \Xflat_1$.
Along the way, $A,B,D,X, \Xflat$ will change;
e.g., points will be removed from $\Xflat$ as they are placed in hyperedges of $\calM$.

We first consider the operations we will perform in this phase,
deferring the question of how many semirandom offers are required.

\subsubsection{Phase 2a}

Here we treat all the points in $X_1$, from apexes and base pairs alike.
We will be more precise in a moment,
but roughly speaking, any time we get a semirandom offer consisting of two points $a,a'\in A$, we choose an arbitrary $x \in \Xflat$, defining a hyperedge $e=\set{a,a',x}$,
and add $e$ irrevocably to \calM.
The failed vertex $x$ is now in a hyperedge in \calM,
but $a$ and $a'$ must be deleted from $A$ and $D$:
neither is any longer available to be combined with a base pair.

\medskip

As said in the Outline (\Cref{lemmas}),
we will ensure that, as we delete nodes from $D$,
every node in $D$ retains outdegree at least $k-1$.
To do this, we ``block'' some nodes of $D$ from future deletion;
we let $Q$ denote the set of blocked nodes.
To avoid that any single node's deletion decreases an inneighbour's outdegree by 2 or more,
we intialise $Q$ to be the set of nodes with parallel inedges.
To avoid that the outdegree of any node $u$ is decreased more than once,
if ever an outneighbour $v$ of $u$ is deleted,
we block all other outneighbours of $u$ from future deletion.
That is, whenever a node $v$ is deleted from~$D$,
we add $\Np(\Nm(v))$ to~$Q$.
All that remains is to ensure that in any single action,
we do not delete two nodes with a common inneighbour.

\medskip

With this out of the way, we can be precise.
For any semirandom offer consisting of two points $a,a'\in A_0$,
we choose an arbitrary $x \in \Xflat$, defining a hyperedge $e=\set{a,a',x}$.
We discard $e$ unless the following conditions both hold:
\begin{enumerate}[label={(M2a-C\arabic*)},ref={(M2a-C\arabic*)},leftmargin=3cm]
\item $a,a' \in A \setminus Q$. \label{m2ac1}
\item $\Nm(a)$ and $\Nm(a')$ are disjoint. \label{m2ac2}
\end{enumerate}
(Read the label as M for matching, 2a for the phase, and C for ``condition'';
in the next part, A is for ``action''.)

If the conditions hold, we take the following action:
\begin{enumerate}[label={(M2a-A\arabic*)},ref={(M2a-A\arabic*)},leftmargin=3cm]
\item Add $e$, irrevocably, to \calM.
\item Delete $x$ from $\Xflat$ and $a$ and $a'$ from $A$ (and hence also from $D$).
\item Add $\Np(\Nm(\set{a,a'}))$ to $Q$,
blocking the outneighbours of the inneighbours of the deleted nodes.
\end{enumerate}

Each such action resolves one point in $\Xflat$,
so by \eqref{cardXflat}, $\ax+2\bx$ actions are required.
Each action deletes 2 nodes of part $A$ and 0 nodes of part $B$, so
at the end of this phase,
$\card B = b_0 - \bx$ and
$\card A
 = (a_0-\ax) - 2(\ax+2\bx)
 = a_0 - 3 \ax - 4\bx$.
Also,
\begin{align}\label{a-b}
\card{A}-\card{B}
 &= a_0-b_0-3\ax-3\bx
 \; = \; a_0-b_0-3\card{X_1}
 \\& \geq a_0-b_0-3\gapd n
 \; = 3 \gapd n > 0
 \qquad\text{(by \eqref{cardX1} and  \eqref{cardAB})} .
\end{align}
Thus,
$A$ is larger than $B$ at the end of Phase~2a.

\subsubsection{Phase 2b}
This phase makes the sizes of $A$ and $B$ exactly equal so that they can be matched in $D$.
Recall that at the end of Phase~2a, $\card A > \card B$.
To even them up, we will generate hyperedges each consisting of three vertices from $A$,
deleting the points from $A$ and adding the hyperedge to~\calM.

To be precise,
each time we get a semirandom offer consisting of two vertices $a,a'\in A_0$,
we define a hyperedge $e=\set{a,a',a''}$ for a uniformly random
$a'' \in A \setminus \set{a, a'}$.
We discard the offer unless the following conditions both hold:
\begin{enumerate}[label={(M2b-C\arabic*)},leftmargin=3cm]
\item $a,a',a'' \in A \setminus Q$. \label{m2bc1}
\item $\Nm(a)$, $\Nm(a')$, and $\Nm(a'')$ are disjoint. \label{m2bc2}
\end{enumerate}

If the conditions hold, we take the following action:
\begin{enumerate}[label={(M2b-A\arabic*)},leftmargin=3cm]
\item Add $e$, irrevocably, to \calM.
\item Delete $a, a'$, and $a''$ from $A$ (and hence also from $D$).
\item Add $\Np(\Nm(\set{a,a',a''}))$ to $Q$,
blocking the outneighbours of the inneighbours of the deleted nodes.
\end{enumerate}

From \eqref{a-b}, at the beginning of Phase~2b we have
$\card{A}-\card{B} \equiv a_0-b_0 \equiv a_0+2 b_0 = n \equiv 0 \pmod 3$.
Since each action deletes 3 vertices from $A$,
we can make $\card A = \card B$ exactly.
The precise number of actions needed is not critical
--- it is obviously
$O(\gapd n)$ ---
but from \eqref{a-b} and \eqref{cardAB} it is
\begin{align}\label{nr1b}
\frac{\card{A}-\card{B}}3
 = \frac{a_0-b_0}3-\card{X_1}
 \leq \frac{a_0-b_0}3
 = 2 \gapd n.
\end{align}

\subsubsection{Analysis and parameter choices} \label{analysis}

How many nodes can ever be blocked?
Initially, $Q=Q_1$ consists of the nodes with double inedges.
By \Cref{1vx}, $\tcard{Q_1} = O(n^{2/3})$,
with failure probability exponentially small in $n$.
As usual, we will declare failure of the construction if this fails.

Recall that when an action deletes a node $v$ from $D$, it adds $\Np(\Nm(v))$ to~$Q$.
Phase~2a takes $\ax+2\bx$ actions, each deleting 2 nodes from~$D$;
note that $\ax+2\bx \leq 2 \tcard{X_1} \leq 2\gapd n$, by \eqref{cardX1}.
Phase~2b takes at most $2 \gapd n$ actions, each deleting 3 nodes from~$D$.
Altogether, they delete at most $10 \gapd n$ nodes from~$D$.

The number of inneighbours of these nodes is bounded by \Cref{expansion},
which will be our focus for the next few paragraphs.
The lemma's $N$ is
$\card{A_1}+\card{B_1} > \frac23 n$ by \eqref{cardA1} and \eqref{cardB1},
so $10 \gapd n$ represents at most a $15\gapd$ fraction of $N$.
Thus we take the lemma's $\delta$ --- call it $\dd$ --- equal to $15\gapd$.

The two parts $A_1$ and $B_1$ are of nearly equal size,
so we may take \Cref{expansion}'s $c$ as $\frac13$.
We will make many assertions of this sort, so let us be more precise this one time.
In this case, from \eqref{cardAB}, \eqref{cardA1}, and \eqref{cardB1} we have
$(\frac13+3\gapd) n \leq \card{A_1} \leq \card{A_0} = (\frac13+4\gapd) n$ and
$(\frac13-3\gapd) n \leq \card{B_1} \leq \card{B_0} = (\frac13-2\gapd) n$.
We will choose $\gapd$ small enough that it is obvious that
each of $A_1$ and $B_1$ has size at least $1/3$rd that of their union.
More to the point, forgetting the precise multiples of $\gapd$ here,
we will have expressions like $\card{A_1} = (\frac13+O(\gapd))n$,
and it will be clear that $\gapd$ \emph{can} be chosen sufficiently small
to satisfy the various assertions.

Continuing, by \Cref{expansion},
the number of inneighbours is at most
$\eta_{1/3,k}(\dd) N$.
Each inneighbour has $k$ outneighbours, so the number of outneighbours
is at most $k \: \eta_{1/3,k}(\dd) N$.
Since $\card{Q_1} = o(n) = o(N)$, we have that at all times
$\card Q \leq k \: \eta_{1/3,k}(\delta) N + o(N)$.
We will contrive that $k \: \eta_{1/3,k}(\dd) \leq \tfrac1{10}$,
so that $\card Q \leq \tfrac N 9$
and $\card Q/ \card{A_1} \leq (\tfrac N 9)/(\tfrac N 3) \leq \tfrac13$
and symmetrically $\card Q/ \card{B_1} \leq \tfrac13$.

This dictates our parameter choices.
We take $k=10$ throughout, for purposes of \Cref{Hall}.
Then, in \Cref{expansion}, choosing $\delta =\dd = \num{0.000 15}$ suffices,
giving $k \: \eta_{1/3,k}(\dd) \leq \tfrac1{10}$ as desired.
(The value of $\eta$, defined in Lemma~\ref{expansion}, is determined by the first term in the max;
the second term, $40 \: \delta$, is smaller.)
Since we set $\dd=15 \gapd$, taking $\gapd = \num{0.000 01}$ suffices.

Recall that Phase~1 used $3Cn$ semirandom offers, with $C$ given by \Cref{k-out}.
Now that we have fixed $\gapd=\num{0.000 01}$,
the proof of \Cref{k-out}
fixes its $\delta=\gapd/2$ (the second term in the min is much larger).
In turn, to give $\Pr(\Po(\la) < k+3) = \delta/4 = \gapd/8 =\num{0.000 001 25}$,
it fixes $\la$ to be between 37 and 38, for $C=2.1 \la < 80$.
Thus, $3Cn < 240n$ semirandom offers suffice for Phase~1.

\smallskip

Looking ahead, we will invoke \Cref{Hall} to prove existence of a perfect matching in~$D$.
The lemma's \calA and \calB are the parts  $A_1$ and $B_1$ at the start of Phase~2,
its $\calA'$ and $\calB'$ are the parts $A$ and $B$ at the end of Phase~2,
and its $\rho$ is (an upper bound on) the fraction of nodes deleted,
so we take $\rho=\delta$.
With the choices above, then, $\rho \leq 0.049$, satisfying the hypothesis.

\bigskip

Having controlled the size of $Q$, we are ready to consider the number of semirandom offers needed in Phase~2.

In Phase~2a, that the first point offered is in $A_0$ occurs with probability just above $1/3$.
The cardinality of $A$ is nearly that of $A_0$
and we have arranged that $\card Q \leq \frac1{3} \card{A_1}$, so
\begin{align}\label{m2ac1-x}
 \Pr(a \in A \bk Q)
   & \geq \tfrac13 \cdot (1-O(\gapd)- \tfrac1{3})
   \; > \; \tfrac1{5} .
\end{align}

Independently (almost), the same holds for $a'$,
so \ref{m2ac1} holds with probability at least $1/25$.%
\footnote{%
We would have independence if the offered $a'$ were independent of $a$,
allowing the possibility that $a'=a$ (in which case we would reject the offer).
We can simulate this independence by tossing a coin and with probability $1/n$
replacing the offered $a'$ with $a$.
In this simulated model, \ref{m2ac1} and distinctness hold with probability
at least $\Pr(a \in A \bk Q)^2 - 1/n$,
here simply $1/25$ by absorbing the $1/n$ into slack in \eqref{m2ac1-x}.
Since the simulated substitution of $a'$ for $a$ is never advantageous,
\ref{m2ac1} holds with probability at least $1/25$ in the original model.%
}
By \Cref{2vxs}, the chance that \ref{m2ac2} fails to hold is $O(1/n)$.
Thus, each offer satisfies the conditions with probability at least $1/26$.
Since at most $2\gapd n$ actions are needed (see \eqref{cardX1}),
a.a.s., $27 \cdot 2 \gapd n$ offers suffice.

Similarly, in Phase~2b, an offer plus the added point $a''$
satisfies condition \ref{m2bc1} with probability more than $(1/5)^3$
(even if we chose $a''$ in [$n$] rather than $A$),
and again \ref{m2bc2} fails with probability $O(1/n)$,
so each offer results in action with probability at least $1/126$.
By \eqref{nr1b}, the number of actions needed is at most $2\gapd n$,
so, a.a.s., $127 \cdot 2 \gapd n$ offers suffice.

It is clear that, with control of $\card Q$,
each offer in Phases~2a and 2b results in action with probability $\Theta(1)$,
and thus $O(n)$ offers suffice.
Numerically, Phases~2a and 2b together require at most
$(27+127) \cdot 2 \gapd < \num{0.01}n$ offers,
so $241 n$ offers suffice for Phases~1 and~2 (thus for the whole algorithm).
We have made no effort to optimise the constants.

\bigskip

We use $A_2, B_2, D_2$ to denote $A,B,D$ at the end of Phase 2.
At this point, all points outside of $A_2, B_2$ appear in hyperedges of $\calM$.
Since no nodes were deleted from $B$ during Phase~2, and applying~(\ref{cardB1}), we  have:
\begin{align}\label{eP2}
\card{A_2} &= \card{B_2} = \card{B_1} > \frac{n}{3}-3\gapd n.
\end{align}

\subsection{Phase 3: Matching the bulk of the points}\label{sHall}

We complete the hypergraph matching $\calM$ with hyperedges
corresponding to the edges of a perfect matching in $D_2$;
\Cref{Hall} proves that a.a.s.\ $D_2$ has a perfect matching.

We apply the lemma with $\calD=D_1$ and $\calD'=D_2$,
and accordingly $\calA=A_1$, $\calA'=A_2$, and $\calB=\calB'=B_1=B_2$.

We already noted in \Cref{analysis}
(and confirmed in \eqref{eP2})
that hypothesis \ref{Hall1} is satisfied:
the fractional size difference between $A_1$ and $A_2$ is $O(\gapd)$ and far less than $1/100$.
Hypothesis \ref{Hall2}, that the two parts are of equal cardinality,
is satisfied by construction; see just before \eqref{nr1b}.
Hypothesis \ref{Hall3}, that every node degree is $k$ or $k-1$,
is also satisfied by construction,
specifically the various action conditions based on the blocked set~$Q$.
Thus, a.a.s.\ $D_2$ contains a perfect matching.

This concludes the proof of Theorem~\ref{mainthm}.

\subsection{Extension to \texorpdfstring{$s$}{s}-uniform hypergraphs} \label{sec:sunif}
We now establish Theorem~\ref{s-uniform}.
As already explained, constructing a perfect matching in
the 1-offer $s$-uniform model
can trivially be simulated in the 2-offer $s$-uniform model,
simulating a 2nd ``offer'' vertex with the extra ``chosen'' vertex.

Constructing a matching in the 2-offer $s$-uniform model
cannot be done using
the 2-offer 3-uniform model
as a black box, but can be done in exactly the same way.
In short, in Phase~0, take a bit more than $n/s$ vertices as apexes $A_0$, arbitrarily partitioning the rest into a bit fewer than $n/s$ ``base'' groups each of size $s-1$, comprising $B_0$.

\subsubsection{Phase 1}
In Phase 1, when offered an apex $a$ and a point $b$ from a base $\bb$,
propose the hyperedge $a \cup \bb$
and correspondingly an auxiliary-graph edge $\set{a,\bb}$,
directed one way or the other.
As before, with such proposals we can construct
a directed auxiliary graph $D = D_1$,
bipartite with parts $A_1$ and $B_1$ comprising most of $A_0$ and $B_0$ respectively,
with $D_1$ a uniformly random $k$-out digraph on $A_1$ and $B_1$.
Let $X=X_1 = (A_0 \cup B_0) \bk (A_1 \cup B_1)$
be the nodes missing from $D$,
and $\Xflat$ the set of points in $X$.

\subsubsection{Phase 2}
In Phase~2a, until $\Xflat$ is empty, when offered two points in $A_0$,
propose a hyperedge consisting of them and any $s-2$ points in $\Xflat$.
If in a final step $0< \tcard{\Xflat} < s-2$,
supplement the points of $\Xflat$ with distinct unblocked points from $A$.
Correspondingly, propose the action of deleting the two apexes from
part $A$ of the auxiliary graph $D$.

As we did for matchings, we block $\Np(\Nm(v))$ for any node $v$
deleted from $D$.
We accept a proposed action
(and accept the corresponding hyperedge for $\calM$)
if the offered vertices lie in the current set $A$ and are not blocked.
By analogy with the argument around \eqref{m2ac1-x},
by controlling the size of $\Xflat_1$
we can ensure that the number of nodes deleted and blocked is small,
ensuring in turn that each proposed action is accepted w.p.\
at least
$(1/s\cdot 0.98)^{s-2} = \Omega(1)$.
Also as for matchings, we choose the sizes $\card{A_0}$ and $\card{B_0}$
so that, a.a.s., when Phase~2a ends, $\card A \geq \card B$.

In Phase~2b, when offered 2 vertices $a,a' \in A_0$,
propose a hyperedge consisting of them and $s-2$
distinct points in $A \setminus \set{a,a'}$ and
correspondingly propose the action of deleting these $s$ points from
part $A$ of $D$.
Accept the proposal if the two offered points were in $A \bk Q$;
in this case, delete these $s$ apexes from $D$,
and accept the hyperedge for $\calM$.

Again, each proposed action is accepted w.p.\ at least
$(1/s\cdot 0.98)^{s-2} = \Omega(1)$.
Stop when the number of apexes is equal to the number of bases,
$\card A = \card B$.

For the stopping condition,
since at the end of Phase~2a, $\card A \geq \card B$,
and Phase~2b decreases the size of $A$ without changing $B$,
it is clear that at some time we get $\card A \leq \card B$.
At this time,
we get equality if $\card A \equiv \card B \pmod s$,
and this is so.
Let $V=[n]$ be the set of all points, with $\card V \equiv 0 \pmod s$.
Let $\flatVD$ be the set of points in $D$,
i.e., the points in $A$ and the points belonging to bases in~$B$.
At this step, the points of $V \setminus \flatVD$ are all in hyperedges in $\calM$, so
$\card{V \bk \flatVD} \equiv 0 \pmod s$.
Thus, at this step,
\begin{align}\label{match-s}
\card A - \card B
 &\equiv \card A + (s-1) \card B
 = \card{\flatVD}
 = \card V - \card{V \bk \flatVD}
 \equiv 0
 \pmod s .
\end{align}

\subsubsection{Phase 3}
Finally, construct a matching in $D$ (ignoring the edge directions).
The corresponding hypergraph edges complete a perfect matching in the hypergraph.

As for $s=3$,
it is enough to ensure that when constructing $D_1$,
$\card X \leq \gapd n$ for some sufficiently small constant $\gapd$.
The number of Phase~2 steps is $O(\gapd n)$
(the $O(\cdot)$ expression hiding some constant independent of $\gapd$),
so the number of nodes deleted from $D$ is also $O(\gapd n)$.
By \Cref{expansion},
$Q$ remains of size at most $k \: \etad(O(\gapd)) n$ (plus $O(n^{2/3})$),
and this can be made an arbitrarily small fraction of $n$ by choosing
$\gapd$ sufficiently small.
If follows that each action is accepted w.p.\ $\Omega(1)$.

\section{Loose Hamilton cycles} \label{HcSec}

Now we turn to the proof of Theorem~\ref{HcThm}.
We will demonstrate a strategy that,
within a linear number of steps,
a.a.s.\ produces a 3-uniform hypergraph with a loose Hamilton cycle.
The vertex set is $[n]$ for some even $n$.
(The number of vertices must be even if a loose Hamilton cycle exists.)
The odd points will occur twice each in the loose cycle,
the even points just once, in the form
\begin{align} \label{idealHc}
(1,a_1,3), \; (3,a_2,5), \; \ldots, \; (n-1,a_{n/2},1) ,
\end{align}
where we write each edge as an ordered triple to indicate the roles of the three points,
and $a_1,a_2,\ldots,a_{n/2}$ is a permutation of $2,4,\ldots,n$.
We will not be able to get exactly such a cycle,
with the odd numbers appearing in order,
but will obtain a Hamilton cycle that differs from it on very few edges.

The approach is similar to that for generating a perfect matching in \Cref{SRHmatching}.
We define apex and base sets,
each of cardinality exactly $n/2$:
\begin{align}
A_0 &= \set{2,4,\ldots, n}
&
B_0 &= \set{\set{1,3}, \set{3,5},\ldots \set{n-1,1}} .
\end{align}
Here the bases are overlapping,
in order to build a cycle like that of \eqref{idealHc},
where for hypergraph matchings they were disjoint.

The next steps are structurally the same as for matchings in \Cref{SRHmatching}.
As in \Cref{SRHmatching}, in the 1-offer model we could (exactly as in \Cref{SRHmatching}) make a 2-out graph on $A_0$ and $B_0$; it would a.a.s.\ contain a perfect matching; and that auxiliary graph matching corresponds to a loose Hamilton cycle in the hypergraph. 
In the 2-offer model it is not so easy.

We will construct a directed bipartite graph on $A_0$ and $B_0$,
but it will not have a perfect matching because a small number of nodes fail to have sufficiently high degree (and cause other nodes to ``fail'' as well).
We deal with the failed nodes first, and in doing so add certain hyperedges, irrevocably, to our cycle-in-the-making $\calC$.
($\calC$ plays the role that $\calM$ did for hypergraph matchings.)
When the failed nodes are dealt with, most of our bipartite graph will remain, and what remains will have a perfect matching.
The matching edges correspond to hyperedges in $\calH$, and they complete $\calC$ to a loose Hamilton cycle.

\subsection{Phase 1}
In the first phase, we build a graph $D$ in the same manner as we did when building a perfect matching in Section~\ref{SRHmatching}.

For each offer of the form $\set{a,b}$,
where $a \in A$ and $b \in \bb \in B$ is a member of a base pair,
choose as third point the partner of $b$
and add $\set{a,\bb}$ to a set $\Psi$. (We ignore any offers that are not of this form.)

$\Psi$ forms a set of edges of a bipartite graph on $A_0\times B_0$. Note that a perfect matching on this graph would yield a loose Hamilton cycle of the form in~\eqref{idealHc}.

For any $\gapd>0$,
Lemma~\ref{k-out} ensures,
exactly as in Phase 1 of Section~\ref{SRHmatching},
that within $O(n)$ steps we can
construct a directed bipartite graph $D=D_1$
with parts
$A_1 \subseteq A_0$ and $B_1 \subseteq B_0$,
such that $D_1$ is a uniformly random $k$-out bipartite multigraph
on $A_1 \times B_1$,
and a.a.s.\
$\card{A_1} \geq (1-\gapd) \card {A_0}$
and $\card{B_1} \geq (1-\gapd) \card {B_0}$.
We refer to the apexes in $A_1$ as \emph{good}
and those in $\AX = A_0 \setminus A_1$ as \emph{failed},
and likewise for bases.
We have
\begin{align}  \label{Hcardax}
\ax &= \card\AX \leq \gapd n/2
&
\bx &= \card\BX \leq \gapd n/2 .
\end{align}

\subsection{Phase 2}

We introduce an additional structure, \calP, central to the analysis.
$\calP$ is a graph on the odd points, consisting of vertex-disjoint paths.
In particular, a path in \calP can be an isolated vertex.
Initially, the edges in $\calP$ are precisely the base pairs $B_1$.
Eventually we will make $\calP$ a single path and then a Hamilton cycle.
Always, the edges in $\calP$ are of two types:
every hyperedge $e=(b_1,a,b_2) \in \calH$ contributes a \calP-edge $(b_1,b_2)$ (and the apexes $a$ of these will all be distinct);
and every base pair in $B$ is an edge in \calP 
(and later it will be possible to perfectly match all these base pairs
and the remaining apexes, in~$D$).
When $\calP$ is a Hamilton cycle, this ensures that there is a corresponding
loose hypergraph cycle $\calC$ as desired.

\smallskip

In this phase we will again use a set $Q$ of nodes of $D$ ``blocked'' from use.
Initially we will set $Q=Q_0$ to consist of any nodes whose inedges include any double edge; recall from \Cref{1vx} that $\card{Q_0}=o(n)$.
Initially, \calP has $\bx$ components.
Let us describe the phase just enough to count the number of nodes it will delete from each part of~$D$,
both to bound $\card Q$ and to confirm that, going in to Phase~3, $\card A = \card B$.

In Phase~2a,
each action (see~\Cref{patchA.fig}) will assign an apex in
$\AX$ to a hyperedge (along with two base points from different base pairs) and place it in~\calC,
increasing the number of paths in \calP by 1, and deleting 2 base pairs from~$B$.
Phase~2a takes $\ax$ actions
to resolve all the failed apexes, resulting in a total of
$2\ax$ deletions from the set of base pairs in $B$,
so that $\card{B_0 \bk B} = \bx + 2\ax$,
and there are $\bx + \ax$ components in~\calP.
The part $A$ is unchanged, with $\card{A_0 \bk A} = \ax$.

In Phase~2b,
each action will decrease the number of components of \calP by~1
(see~\Cref{patchB.fig}),
deleting 2 apexes from $D$ (committing them to hyperedges)
and deleting 1 base pair from~$B$.
This phase takes $\ax + \bx$ actions
to make \calP a cycle,
in the process making
$\card{A_0 \bk A} = \ax+2(\ax+\bx)$ and
$\card{B_0 \bk B} = (\bx+2\ax)+(\ax+\bx)$,
both equal to $3\ax+2\bx$.

In the end, then,
we have equal numbers of apexes and base pairs not in~$D$,
and thus also equal numbers remaining in~$D$.

The number of apex nodes deleted from $D$ by the two phases is $0+2(\ax+\bx)=2\ax+2\bx$,
and the number of base nodes deleted is $2\ax+(\ax+\bx) = 3\ax+\bx$;
by \eqref{Hcardax} each is at most $2 \gapd n$.

We can make the blocked set $Q$ arbitrarily small
by choosing $\gapd$ sufficiently small, as we now show.
For any given $\gapq>0$,
let \Cref{expansion}'s
$\eta$ equal $\gapq/k$,
and take the lemma's corresponding $\delta$ to determine $4 \gapd$
(choosing $\gapd$ smaller if required elsewhere).
Let $S$ be the set of deleted nodes.
Then, \Cref{expansion} ensures that the deleted set's inneighbourhood has size
$\card{\Nm(S)} \leq \eta \: \nn \leq (\gapq/k) n$.
Since $D$ is $k$-out,
the inneighbourhood's outneighbourhood
--- which is to say, the rest of $Q$ beyond $Q_0$ ---
has size
$$\card{Q \bk Q_0} = \card{\Np(\Nm(S))} \leq  k (\gapq/k) n = \gapq n . $$

\subsubsection{Phase 2a}

First we treat the failed apex nodes, those in $A_X$,
doing something analogous to Phase~2a for matchings.
In Phase~2b we will treat the failed bases,
which has some extra complexity.

Upon offer of odd points $b_1$ and $b_2$,
we propose a hyperedge $e=(b_1,a,b_2)$,
where $a$ is an arbitrary apex in $A_X$.
If $b_1$ and $b_2$ belong to different paths in \calP then let $b'_1 = b_1+2$ and $b'_2=b_2+2$.
If they belong to a common path $P$ in $\calP$ then,
on the induced path between them,
let $b'_1$ be the neighbour of $b_1$ and $b'_2$ that of $b_2$. 
(In the common-path case, typically $b_1' = b_1 \pm 2$ and $(b_1,b_1') \in B$, and likewise for $(b_2,b_2')$; more on this soon.)

We discard the hyperedge unless the following conditions all hold:
\begin{enumerate}[label={(H2a-C\arabic*)},ref={(H2a-C\arabic*)},leftmargin=3cm]
\item \label{s1}
$(b_1,b_2)$ is not in \calP (therefore not in~$B$).
\item \label{s2}
$(b_1,b'_1)$ and $(b_2,b'_2)$ are in $B \setminus Q$.
\item  \label{s3}
$\Nm(b_1) \cap \Nm(b_2) = \emptyset$.
\end{enumerate}
(The labelling here is H to connote Hamilton cycle, 2a the phase, and C a condition, with A for action in the next group.)
Several of these conditions are unnecessary,
but it is probable that they all hold (as will be shown),

\begin{figure}[t]
    \centering
    \begin{tikzpicture}[scale=1.3,
    every node/.style={font=\footnotesize},
    arrow/.style={    decoration={markings,mark=at position 0.5 with {\arrow{#1}}},    postaction={decorate}  }]

\draw (0,2)  -- (2,2);
\draw[dotted] (0,2) -- (3,2);
\draw  (3,2)  -- (5,2);
\foreach \x/\label in {0/$s_1$, 2/$b_1$, 3/$b_1'$, 5/$t_1$} {
  \draw (\x,2) node[circle,fill,inner sep=1pt,label=above:\label] {};
}

\draw  (0,0)  -- (2,0);
\draw[dotted] (2,0) -- (3,0);
\draw  (3,0)  -- (5,0);
\foreach \x/\label in {0/$s_2$, 2/$b_2$, 3/$b_2'$, 5/$t_2$} {
  \draw (\x,0) node[circle,fill,inner sep=1pt,label=below:\label] {};
}

\draw[dashed] (2,2) -- (2,0) node[midway,right] {$(b_1,a,b_2)$};

\end{tikzpicture}
    \caption{Salvaging a ``failed'' apex $a$ in Phase~2a.
    Upon offer of suitable odd points $b_1$ and $b_2$,
    hyperedge $(b_1,a,b_2)$ is introduced (dashed line)
    and added to $\calC$.
    The good base pairs $\set{b_1,b_1'}$ and $\set{b_2,b_2'}$ (dotted lines)
    are deleted from $D$.
    (As a failed apex, $a$ was already absent from $D$.)
    This converts two paths in $\calP$ (the figure's upper and lower lines)
    into three,
    or one path into two if $b_1$ and $b_2$ lie on a common path
    (if in the figure, $t_1=t_2$).
    }
\label{patchA.fig}
\end{figure}

If the conditions all hold, take the following action (see \Cref{patchA.fig}).
\begin{enumerate}[label={(H2a-A\arabic*)},ref={(H2a-A\arabic*)},leftmargin=3cm]
\item
Add $(b_1,a,b_2)$ irrevocably to $\calC$, and add $(b_1,b_2)$ to \calP.
\item
Delete $(b_1,b'_1)$ and $(b_2,b'_2)$ from \calP, $B$, and~$D$.
Also, delete $a$ from $A_X$.
\item
Add $\Np(\Nm(\set{(b_1,b'_1), (b_2,b'_2)}))$ to $Q$,
blocking the outneighbours of the inneighbours of the deleted nodes.
\end{enumerate}

The action incorporates the failed apex $a$ into a hyperedge,
at the expense of removing two good base pairs $(b_1,b'_1)$ and $(b_2,b'_2)$ from $D$,
and turning two paths in \calP into three
(as shown in the figure, when $b_1$ and $b_2$ are in distinct paths)
or one into two (when $b_1$ and $b_2$ are in a common path).
Every pair in $B$ should be an edge of \calP, and since the action deletes $(b_1,b'_1)$ and $(b_2,b'_2)$ from $B$ as well as from \calP, this remains the case. 
The action decreases the outdegree of any node in $D$ by at most~1:
it is only $(b_1,b'_1)$ and $(b_2,b'_2)$ that are deleted,
by condition \ref{s2} neither was in $Q$ therefore neither has a double inedge,
and by condition \ref{s3} they have no common inneighbour.

In any trial, the conditions are likely to be satisfied.
For \ref{s1}, there is probability $O(1/n)$ that $b_1$ and $b_2$ are path neighbours.
For \ref{s2}, $b_1'$ is either $b_1+2$ or a \calP-neighbour of $b_1$, but it is likely that the \calP-neighbours of $b_1$ are just $b_1 \pm 2$, because the $(1/2-O(\gapd))n$ edges in \calP include the $(1/2-O(\gapd))n$ pairs in~$B$.
And if $b_1' = b_1 \pm 2$, it is likely that $(b_1,b_1') \in B \setminus Q$, since $B$ includes $(1/2-O(\gapd))n$ pairs of this form while $\card Q=O(\gapq)n$.
The same holds for $b_2'$.
Finally, \ref{s3} follows from \Cref{2vxs}.
Since all of the handful of failure events have probability $O(\gapd+\gapq)$,
so does their union.

\subsubsection{Phase 2b}

We now turn to the $O(\gapd)n$ failed bases, which has some extra complexity.
Details will follow but the basic idea is illustrated in \Cref{patchB.fig}.
If there were a previous semirandom offer of $(b,a)$
accepted as hyperedge $(b,a,s_1)$, with $s_1$ an endpoint of a path $P_1 \in \calP$
whose other endpoint is $t_1$,
then if there is a new offer $(b',a')$,
where $(b,b')$ is an edge in a path $P$ in $\calP$,
accepting the new offer as $(b',a',t_1)$
allows the paths $P$ and $P_1$ to be merged.
In this case, the hyperedges $(b,a,s_1)$ and $(b',a',t_1)$ are both added irrevocably to $\calC$ and $(b,b')$ is deleted from $P$, $B$, and~$D$.

The operation works equally well whether $b'$ lies between $b$ and $t$,
as shown, or between $b$ and $s$.
If \calP consists of a single path,
$s,\ldots,b,b',\ldots,t$,
then a similar operation, using hyperedges $(b,a,t)$ and $(b',a',s)$,
turns \calP into a cycle.

\begin{figure}[t]
    \centering
\begin{tikzpicture}[scale=1.3,
    every node/.style={font=\footnotesize},
    arrow/.style={    decoration={markings,mark=at position 0.5 with {\arrow{#1}}},    postaction={decorate}  }]

\draw  (0,0)  -- (2,0);
\draw[dotted] (2,0) -- (3,0);
\draw  (3,0)  -- (5,0);
\foreach \x/\label in {0/$s$, 2/$b$, 3/$b'$, 5/$t$} {
  \draw (\x,0) node[circle,fill,inner sep=1pt,label=below:\label] {};
}

\draw (-0.5,1) to [bend left=20] (5.5,1);
\draw (-0.5,1) node[circle,fill,inner sep=1pt,label=left:{$s_1$}]{};
\draw (5.5,1) node[circle,fill,inner sep=1pt,label=right:{$t_1$}]{};

\draw[dashed] (2,0) -- (-0.5,1) node[midway,above,xshift=1cm] {$(b,a,s_1)$};
\draw[dashed] (5.5,1) -- (3,0) node[midway,above,xshift=-1cm] {$(b',a',t_1)$};

\end{tikzpicture}
\caption{%
Patching together base-pair paths in Phase~2b.
Suppose a previous offer included an apex $a$ and base point $b$,
to which we added some path end $s_1$
to propose hyperedge $(b,a,s_1)$.
If by coincidence the current offer includes base point $b+2$
and an apex $a'$,
then, for some the opposite path end $t_1$,
propose hyperedge $(b+2,a',t_1)$.
Accepting this pair of proposals means
adding the (dashed) path edges $\set{b,s_1}$ and $\set{b+2,t_1}$
and deleting the (dotted) edge $\set{b,b+2}$,
turning two $\calP$ paths into one.
Correspondingly, we accept the two hyperedges into $\calC$,
and delete the base pair $\set{b,b+2}$ from $B$.
}
\label{patchB.fig}
\end{figure}

This merging phase proceeds in rounds, each starting with \calP having $\ell$ components and reducing that to $\floor{(9/10)\ell}$ components,
until the final round where \calP goes from a Hamilton path to a Hamilton cycle.
If any round fails, we declare failure of the whole algorithm;
we will show this to be unlikely.

We partition the $n/2$ odd points into
$\floor{n/4}$ disjoint ``cell'' pairs
$\set{1,3},\ab \set{5,7}, \set{9,11},\ab \ldots$.
(If $n \equiv 2 \pmod 4$ there is also an odd-man-out singleton $\set{n-1}$;
it will never be used, is merely a pesky detail,
and will only be mentioned once again.)
These cells are fixed for all rounds.
Hyperedges added in a round are used only within that round;
except for those used in actions,
they are ignored in future rounds and can be thought of as deleted.

\subsubsection*{Algorithm for a round}
We consider a round starting when \calP has $\ell$ components.
For each path $P_i$ in $\calP$, designate one endpoint as the start $s_i$ and the other as the terminal $t_i$.
(If $P_i$ is an isolated point, $s_i=t_i$.)
The round will consider up to $10 f$ semirandom offers, with
\begin{align}
f &= f(\ell) = n^{3/4} \ell^{1/4} .
\end{align}
The round terminates with success as soon as \calP has $\floor{(9/10)\ell}$ components,
or with failure after $10 f$ semirandom offers
or after $f^2/n$ actions have been attempted (see below), whichever comes first.

Consider only offers of the form $(b,a)$,
$b$ odd (and not the pesky odd-one-out singleton, if any) and $a$ even, discarding others.
Also discard any offer where $b$ was offered earlier in this round.
Assume then that this is the first time $b$ is offered.
If the cell partner $b'$ of $b$ also has not been offered,
complete this offer to $(b,a,t)$ where $t$ is a random terminal,
and add $(b,a,t)$ to \calH.
Consider the cell ``seeded''.
If the cell partner $b'$ was previously seen (the cell was seeded),
then some hyperedge $(b',a',t_i)$ is in \calH,
in which case complete the current offer to $(b,a,s_i)$.
Consider the cell of $b$ ``filled'',
and ``attempt action''.
Attempting action means taking action (see below)
if the following conditions are satisfied:

\begin{enumerate}[label={(H2b-C\arabic*)},ref={(H2b-C\arabic*)},leftmargin=3cm]
\item \label{m1}
The path $P_i$ with endpoints $s_i$ and $t_i$ has not already been merged
into another path: $s_i$ and $t_i$ remain path endpoints.
($P_i$ may have subsumed another path; that is fine.)
\item \label{m1x}
$(b,b') \notin P_i$. (This condition does not apply in the final round; see below.)
\item \label{m2}
$(b,b')$ is a good base pair in $B$ (and therefore an edge in \calP).
\item \label{m3}
$a$ and $a'$ are distinct apexes in $A$.
\item \label{m4}
None of $(b,b')$, $a$, nor $a'$ is in $Q$ (blocked).
\item \label{m5}
$\Nm(a) \cap \Nm(a') = \emptyset$.
\end{enumerate}

If these conditions are satisfied, the action is taken as follows.
The action decreases the number of components of \calP by~1;
see \Cref{patchB.fig}.
\begin{enumerate}[label={(H2b-A\arabic*)},ref={(H2b-A\arabic*)},leftmargin=3cm]
\item
Add the hyperedges $(b,a,s_i)$ and $(b',a',t_i)$ to $\calC$, and
add the edges $(b,s_i)$ and $(b',t_i)$ to~\calP.
\item
Delete $(b,b')$ from \calP, $B$, and $D$, and
delete $a$ and $a'$ from $A$ and $D$.
\item
Add $\Np(\Nm(\set{(b,b'),a,a'}))$ to $Q$,
blocking the outneighbours of the inneighbours of the deleted nodes.
\end{enumerate}

The last round is a special case.
Any round starting with $\ell \geq 10$ paths ends with $\floor{\frac{9}{10}\ell} \geq 9$. The rounds starting with $2\leq\ell\leq 9$ paths
each decrease the number of paths by 1,
so the last round starts with $\ell=1$.
Here, if $b$ and $b'$ share a cell in the single path $s,\ldots,b,b',\ldots,t$, then
if offered $(b,a)$ we make hyperedge $(b,a,t)$, and
if offered $(b',a')$ we make hyperedge $(b',a',s)$.
In this case, an action makes \calP a Hamilton cycle.
At that point, we proceed to Phase~3.

\subsubsection*{Analysis}

The conditions ensure that no node in $D$ has its outdegree reduced by more than~1.

Also, Phase~2b completes within $O(n)$ offers.
Round 0 starts with $\ell_0 = O(n)$ so round $i$ starts with
$\ell_i \leq \ell_0 (9/10)^i$,
and over all rounds the total number of offers is at most
\begin{align*}
\sum 10 f(\ell_i)
 &= \sum 10 n^{3/4} ((9/10)^i \ell_0)^{1/4}
 = O(n) \sum ((9/10)^{1/4})^i
 = O(n) .
\end{align*}

\medskip

It remains only to show that a.a.s., in every round, at least $(1/10)\ell$ actions are taken.

Note that for every $i$, with $f_i=f(\ell_i)$,
\begin{align}\label{fbounds}
 f_i & \geq f(1) = \Omega(n^{3/4})
 & \text{and} &&
 f_i & \leq f(\ell_0) = n \gapd^{1/4} \leq n/100 ,
\end{align}
the latter depending on having $\gapd$ sufficiently small.

We now focus on a single round and thus refer simply to $\ell$ and $f=f(\ell)$.
We will use the phrase ``overwhelmingly unlikely'' for probabilities of order $\exp(-\Omega(n^p))$ for some constant $p>0$,
and ``with overwhelming probability'' for the complement.

\begin{claim} \label{action2bAttempts}
With probability $1-\exp(-\Omega(n^{1/4}))$, $10f$ offers fill at least $f^2/n$ cells,
enabling $f^2/n$ attempted actions.
\end{claim}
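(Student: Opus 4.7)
\textit{Proof plan.} The two assertions in the claim are equivalent---each filled cell triggers exactly one attempted action---so it suffices to show that at least $f^2/n$ cells are filled during the round with probability $1-\exp(-\Omega(n^{1/4}))$. My plan is a first-moment computation followed by Azuma concentration.

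I model the $10f$ offers as i.i.d.\ uniform random pairs from $\binom{[n]}{2}$ and let $F$ count the cells $c=\set{b,b'}$ such that each of $b$ and $b'$ appears as the odd coordinate of some form-valid offer; equivalently, $F=\sum_c I_c$ where $I_c$ is the corresponding indicator. For a fixed odd vertex $b$, a single offer is form-valid with $b$ in the odd role with probability $(n/2)/\binom{n}{2}=1/(n-1)$, so the probability that $b$ appears at least once is $1-(1-1/(n-1))^{10f}=(1+o(1))\cdot 10f/n$, using $f\leq n/100$ from~\eqref{fbounds}. A short inclusion--exclusion computation (exploiting that a single offer cannot be form-valid for both $b$ and $b'$ simultaneously) gives $\Pr(I_c=1)=(1+o(1))\cdot 100\,f^2/n^2$, and summing over the $\Theta(n)$ cells yields $\E F\geq 20\,f^2/n$ for $n$ large enough.

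For concentration I invoke Theorem~\ref{Azuma} with $F$ as a function of the $10f$ i.i.d.\ offers. Altering a single offer changes the set of offered odd vertices by at most one addition and one deletion, each affecting at most one cell's fill status, so the Lipschitz constants satisfy $c_j\leq 2$ and $\sum c_j^2\leq 40f$. Taking $t=\E F/2\geq 10\,f^2/n$ yields
\[
    \Pr\bigl(F\leq f^2/n\bigr) \;\leq\; \Pr\bigl(F\leq \tfrac12\E F\bigr) \;\leq\; \exp\!\left(-\Omega(f^3/n^2)\right).
\]
By~\eqref{fbounds}, $f\geq \Omega(n^{3/4})$, so $f^3/n^2\geq \Omega(n^{1/4})$, matching the claim's failure probability.

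The main obstacle I anticipate is making sure that $\E F$ exceeds the target $f^2/n$ by a large enough constant factor for Azuma to take over; this reduces to carefully tracking the constants in the inclusion--exclusion above. The minor bookkeeping points---the possible singleton when $n\equiv 2\pmod 4$, the fact that only the first form-valid appearance of each $b$ is processed (later ones being discarded), and rare coincidences within a single offer---perturb $\E F$ by $O(1)$ or by lower-order terms and are easily absorbed into the constants.
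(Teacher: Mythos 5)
Your proof is correct, and it reaches the same failure rate $\exp(-\Omega(f^3/n^2))=\exp(-\Omega(n^{1/4}))$ by the same basic toolkit (first moment plus bounded differences), but the decomposition is genuinely different from the paper's. The paper argues in stages: it first shows that among the $10f$ offers there are many ``left'' and many ``right'' odd points, then treats the left points as balls landing in cells (Azuma once, to get $\ge 2.1f$ left-occupied cells), and finally treats the right-occupied cells as uniformly placed and counts how many land in left-occupied cells (Azuma a second time, giving the dominant $\exp(-\Omega(f^3/n^2))$ term). You instead compute the per-cell fill probability directly by inclusion--exclusion over the i.i.d.\ offers and apply McDiarmid once, with the offers themselves as the independent coordinates; the Lipschitz bound $c_j\le 2$ is valid because a form-valid offer contains exactly one odd point, so changing one offer alters the fill status of at most two cells. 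Your route is arguably more direct and avoids the conditioning on the left-occupied set. One small inaccuracy: the ``$(1+o(1))$'' factors are not literally correct at the top of the range $f=\Theta(n)$ (with $10f/n$ as large as $0.1$, the appearance probability is $1-e^{-10f/(n-1)}$, a constant factor below $10f/n$, and the two appearance events are slightly negatively correlated); but since these corrections cost only a bounded constant factor and your target $f^2/n$ sits a factor $20$ below your bound on $\E F$, the slack you explicitly flagged absorbs this and the argument stands. It would be cleaner to state the expectation step as an explicit constant-factor lower bound, e.g.\ $\Pr(I_c=1)\ge 80\,f^2/n^2$, rather than as an asymptotic equality.
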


\begin{proof}
Although the offers arrive successively
(and the round is terminated once $(1/10)\ell$ actions are taken),
consider the round's full $10f$ offers.
Each offer has probability asymptotically
$1/2$ of being of the requisite form $(b,a)$.
Each one of that form has equal probability of $b$ being a ``left'' or ``right''
element of a cell, i.e., of
$b \equiv 1 \pmod 4$ or $b \equiv 3 \pmod 4$.
In $10f$ offers, with probability $1-\exp(-\Omega(n))$,
there will be at least $2.4f$ each lefts and rights;
we assume henceforth that this is so.

We have $\ell \leq \ell_0 \leq O(\gapd) n$ and thus (for $\gapd$ sufficiently small)
$2.4 f(\ell) \leq n/10$ and thus each new left falls into a new cell
with probability at least $9/10$,
so the expected number of left-occupied cells is at least $2.2 f$.
Each of these left ``balls'' fell in a random cell,
and changing the location of one ball changes the number of left-occupied cells by at most~1,
so by the Azuma-Hoeffding inequality the probability of a deviation of the order of the mean is overwhelmingly unlikely:
of order
$$\exp(-\Omega(f^2/(f\cdot 1))) = \exp(-\Omega(f)) . $$
We presume henceforth that there are at least $2.1 f$ left-occupied cells.

Likewise, the right balls are overwhelmingly likely to
fall into $2.1 f$ distinct cells,
and we presume henceforth that they do.
The locations of these right-occupied cells are uniformly random.
Each has probability at least $(2.1f)/(n/4)$  of also being left-occupied,
so the expected number of filled cells is at least $17.6 f^2/n$.
Changing the location of one right-occupied cell changes the number of full cells by at most~1,
so by the Azuma-Hoeffding inequality the probability of a deviation
on the order of the mean
is overwhelmingly unlikely, of order
\begin{align*}
\exp(-\Omega( (f^2/n)^2 / (f\cdot 1)))
 &= \exp(-\Omega(f^3/n^2))
  = \exp(-\Omega(n^{1/4})) .
\end{align*}
This finishes the proof of the claim.
\end{proof}

\begin{claim} \label{action2bProb}
When a new cell is filled, the conditions for action are satisfied
with probability at least $2/10$,
assuming that the number of paths in \calP is at least $(9/10)\ell$.
\end{claim}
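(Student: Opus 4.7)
The plan is to bound the failure probability of each of conditions~\ref{m1}--\ref{m5} separately and union-bound. Let $n_0$ be the number of components of $\calP$ at the time the seed for this cell was placed and $n_1$ the number at the current attempt, so $n_1 \geq (9/10)\ell$ by hypothesis and $k := n_0 - n_1 \leq \ell/10$ since each action reduces the count by exactly one. Each intervening action uses as its own $(s_j, t_j)$ the two endpoints of one then-existing path, ``consuming'' that path.

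\textbf{Conditions \ref{m1} and \ref{m1x}.} Failure of \ref{m1} happens iff some intervening action consumes $P_i$. Each action's seed-terminal is uniform over at least $2n_1$ existing terminals, so has probability at most $1/n_1 \leq 10/(9\ell)$ of consuming $P_i$; union-bounding over the $k \leq \ell/10$ actions yields $\Pr(\neg\ref{m1}) \leq 1/9$. Condition \ref{m1x} is the main subtlety. Conditioning on \ref{m1} (and \ref{m2}), $(b, b') \in P_i$ iff $b \in P_i$, and $b$ is uniform among the $n/2$ odd points, so $\Pr(\neg\ref{m1x} \mid P_i) \leq |P_i|/(n/2)$. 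Averaging $|P_i|$ over $t_i$ uniform among the $2n_0$ endpoints at seed time and using $\sum_P |P| = n/2$ gives $\mathbb{E}\,|P_i| = (n/2)/n_0$ at seed time. Even after $P_i$'s cluster has absorbed up to $k$ further seed-time paths through its interior, the average grows by at most a constant factor (roughly $2n_0/n_1 = O(1)$), so $\Pr(\neg\ref{m1x}) = O(1/\ell)$.

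\textbf{Conditions \ref{m2}--\ref{m5}} use only the size bounds established earlier in this subsection. Fewer than $O(\gapd n)$ base pairs have been removed from $B$ (from Phase~2a plus accumulated Phase~2b actions), giving $\Pr(\neg\ref{m2}) = O(\gapd)$; fewer than $O(\gapd n)$ apexes have been removed from $A$ and $\Pr(a = a') = 2/n$, giving $\Pr(\neg\ref{m3}) = O(\gapd + 1/n)$; the earlier bound $\card Q \leq \gapq n$ gives $\Pr(\neg\ref{m4}) = O(\gapq)$; and \Cref{2vxs}, applied to the pair $\{a, a'\}$ in the (nearly) $k$-out graph $D$, yields $\Pr(\neg\ref{m5}) = O(1/n)$.

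\textbf{Summary and obstacle.} Union-bounding, the total failure probability is at most $1/9 + O(1/\ell + \gapd + \gapq + 1/n)$, which for $\gapd, \gapq$ sufficiently small and $\ell \geq 2$ stays well below $8/10$, yielding the desired success probability $\geq 2/10$; in the final round $\ell = 1$, condition \ref{m1x} is skipped per the algorithm's special case and the same bound still holds. The main obstacle is \ref{m1x}: controlling $|P_i|$ requires the path-length balance $\sum_P |P| = n/2$ together with the growth bound $k \leq \ell/10$ on absorbed extensions. With these in hand, the $2/10$ success probability follows with generous margin.
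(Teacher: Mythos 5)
Your overall plan---bounding the failure probability of each of \ref{m1}--\ref{m5} separately and union-bounding---is the same as the paper's, and your treatment of \ref{m1} and of \ref{m2}--\ref{m5} essentially coincides with it. The real divergence, and the main gap, is \ref{m1x}. The paper disposes of it in one line: the condition fails only if the seed's uniformly random terminal happens to be the terminal of the (fill-time) path containing $b$, which has probability at most $1/\ell \le 1/2$ in the non-final rounds ($\ell\ge 2$); together with the $\approx 1/10$ bound for \ref{m1} this already gives $\Pr(E)\ge 3/10$ before small corrections. You instead go through $\E\,|P_i|$, and the crucial step---that after $P_i$ has absorbed other paths its expected size grows ``by at most a constant factor''---is asserted rather than proved (absorption is size-biased: longer clusters are more likely to swallow further paths, so this needs an actual argument, e.g.\ a per-action multiplicative factor $1+O(1/\ell)$ over at most $\ell/10$ actions). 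Worse, your conclusion is an unquantified $O(1/\ell)$, and you then claim the total failure probability ``stays well below $8/10$ for $\ell\ge 2$''. For the rounds where $\ell$ is a small constant ($2\le\ell\le 9$) an $O(1/\ell)$ term with an unspecified constant is just an unspecified constant, so as written the numerical bound $2/10$ is not justified; to salvage your route you would have to pin the constant down (roughly $1.25/\ell$ is attainable), whereas the paper's direct argument avoids the issue entirely.

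A second, smaller omission: the claim is about a newly \emph{filled} cell, i.e.\ a cell that is uniform only after conditioning on not having been filled earlier in the round, and the round's history (which paths were consumed, what is in $Q$) is correlated with which cells are already filled. The paper handles this explicitly: it bounds $\Pr(E)$ for an unconditioned uniform cell, then uses the cap of $f^2/n$ attempted actions to get $\Pr(\bar F)\le 4(f/n)^2\le 4\cdot 10^{-4}$ and concludes $\Pr(E\mid F)\ge \Pr(E)-\Pr(\bar F)\ge 2/10$---this is precisely where the final constant $2/10$ (rather than $3/10$) comes from. Your proof treats $b$ as uniform over all odd points and never addresses this conditioning, so the uniformity underlying both your \ref{m1x} estimate and your \ref{m2}--\ref{m4} estimates is only approximate. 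Both gaps are patchable, but patching them essentially reproduces the paper's argument.
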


\begin{proof}
The location of the new cell is uniformly random over all cells,
conditioned on not being one of the cells previously filled.
For a uniformly random cell $X$,
let $F$ be the event that $X$ was not previously filled, and
let $E$ be the event that $X$ satisfies the conditions for action.
We are interested in $\Pr(E \mid F)$;
consider first just $\Pr(E)$.

For a random cell $X$, condition \ref{m1} fails w.p.\ at most $1/10$,
or the round would already have ended successfully.
Condition \ref{m1x} fails only if the randomly selected terminal
is the terminal of the path containing $b$,
and the probability of this is $1/\ell \leq 1/2$.
Conditions \ref{m2} and \ref{m3} fail with probability $O(\gapd)$,
because $A$ and $B$ contain all but an $O(\gapd)$ fraction of $A_0$ and $B_0$ respectively.
Condition \ref{m4} fails w.p.\ $O(\gapq)$ because
$\card Q = O(\gapq)n$ throughout.
Finally, \ref{m5} fails w.p.\ $O(1/n)$ by \Cref{2vxs}.
In all,
\begin{align*}
\Pr(E) & \geq 1-(1/10 +1/2 +O(\gapd+\gapq)+O(1/n)) \geq 3/10
\end{align*}
for appropriate choices of $\gapd$ and $\gapq$.

The algorithm terminates if the number of attempted actions
--- cells filled --- exceeds $f^2/n$.
Thus, the probability that a random cell $X$ was previously filled is
$\Pr(\overline{F}) \leq (f^2/n)/(n/4) = 4(f/n)^2 \leq 4/100^2$
by \eqref{fbounds}.
So, $\Pr(F) \geq 0.99$.
It follows that $\Pr(E \mid F) \geq \Pr(E \cap F) \geq \Pr(E)-\Pr(\bar F) \geq 2/10$.
\end{proof}

From \Cref{action2bAttempts}, with overwhelming probability
at least $f^2/n$ actions can be attempted, and
if the hypothesis of \Cref{action2bProb} is satisfied,
the number of these succeeding is
distributed as $B(f^2/n, 2/10)$,
and thus is overwhelmingly likely to exceed
$\frac1{10} f^2/n$:
using \eqref{fbounds} again,
the failure probability is of order
$\exp(-\Omega(f^2/n)) = \exp(-\Omega(n^{1/2}))$.

This number of successful actions would far exceed $\ell$: their ratio is
\begin{align*}
\frac{\frac1{10} f^2/n}{\ell}
 &= \frac{n^{3/2}\ell^{1/2}}{10 n \ell}
 = \frac1{10} \, (n/\ell)^{1/2}
 \geq \frac1{10} (1/\gapd)^{1/2}
 > 1 ,
\end{align*}
for $\gapd$ sufficiently small.

It is impossible that the number of successful actions exceeds $\ell$,
so we conclude that, with overwhelming probability,
the hypothesis of \Cref{action2bProb} must at some point fail.
That is, at some point $\calP$ must fall below $(9/10)\ell$,
upon which the round terminates with success.

Since the failures were all overwhelmingly unlikely
(of order $\exp(-\Omega(n^p))$ for some constant $p>0$),
by the union bound,
failure remains overwhelmingly unlikely even over the $O(\ln n)$ rounds.

Assuming success, Phase~2 terminates with \calP consisting of a single cycle.
Some of its edges correspond to hyperedges committed to \calC;
its other edges consist of base pairs comprising one part of $D$,
whose other part consists of the apexes not yet committed to \calC.
Thus $D$ is an induced subgraph of $D_1$,
in which every node's degree is at most 1 smaller than it is in $D_1$.

\subsection{Phase 3}

As previously stated, in this final phase we take a perfect matching between $A$ and $B$ in the graph $D$ with directions removed from its edges.
And, as noted earlier, the cardinalities of $A$ and $B$ match.
Lemma~\ref{Hall} implies that this matching exists, just as in Subsection~\ref{sHall}.

For each $\{b,b+2\}\in B$ and its matching partner $a\in A$, we place the hyperedge $\{b,a,b+2\}$ into \calC.
That hyperedge was formed in Phase 1, when the corresponding edge was added to $D$.
That completes the Hamilton cycle.

\subsection{Extension to \texorpdfstring{$s$}{s}-uniform hypergraphs}

The linear-time 2-offer strategy to construct a loose Hamilton cycle in a 3-uniform hypergraph extends easily to $s$-uniform hypergraphs.

To avoid excessive notation, consider $s=4$ for example.
Take base pairs $\set{1,4}, \allowbreak \set{4,7}, \allowbreak \set{7,10}, \ldots$,
and apex nodes (no longer single points) $\set{2,3},\set{5,6},\set{8,9},\ldots$.
The generalisation to other $s$ is clear.
We will see that it is easy to keep each apex node set intact, so that it behaves just like an apex point in the $s=3$ case.

In Phase~1, on offer of a base point $b$ and an apex point $a$ contained in an apex set $\aa \in A$, choose base point $b+s-1$
(to give base pair $\bb=\set{b,b+s-1}$),
choose all the other points in $\aa$ (to give apex $\aa$),
creating an edge between $\bb$ and $\aa$ in $D$
and correspondingly creating the hyperedge $\set{\bb \cup \aa}$.
This gives an auxiliary graph $D$ as before.

In Phase~2a, on offer of base points $\set{b_1,b_2}$, make hyperedge $\set{\set{b_1,b_2} \cup \aa}$ where $\aa$ is a failed apex node. As before, this uses up the node $\aa$ and turns two paths into three.

In Phase~2b, for an offered base point $b$ and apex point $a \in \aa \in A$, choose some path end point $s_1$ and the rest of $\aa$ to make candidate hyperedge $\set{\set{b,s_1}\cup \aa}$.
When another such candidate hyperedge contains $b+s-1$, just as for $s=3$, three paths can be turned into two, using these two hyperedges and deleting the base $\set{b,b+s-1}$.

With these extensions, the mechanics is exactly as for the $s=3$ case. The treatment of the auxiliary graph $D$ is identical.
This gives the strategy for constructing a loose Hamilton cycle in the 2-offer model for an $s$-uniform hypergraph.

Of course, whatever can be done in linearly many steps in the 2-offer model can also be done in the 1-offer model.
This establishes Theorem~\ref{s-uniformHc}.

\section*{Acknowledgements}
Greg is grateful
to Alan Frieze for guidance on graph Hamilton cycles,
to Olaf Parczyk for extensive discussions on semirandom hypergraphs,
and to Warach Veeranonchai for a careful reading resulting in several improvements to the manuscript.

\bibliographystyle{plain}
\bibliography{refs.bib}

\end{document}